\numberwithin{equation}{section}
\renewcommand{\mkbegdispquote}[2]{\itshape}
\date{\today}
\theoremstyle{plain}
\newtheorem{thm}{Theorem}[section]
\newtheorem{lm}[thm]{Lemma}
\newtheorem{prop}[thm]{Proposition}
\newtheorem*{theorem*}{Theorem}
\theoremstyle{definition}
\newtheorem{df}[thm]{Definition}
\newtheorem{rem}[thm]{Remark}
\newcommand{\IC}{\ensuremath{\mathbb{C}}}
\newcommand{\II}{\ensuremath{{I}}}
\newcommand{\IK}{\ensuremath{\mathbb{C}}}
\newcommand{\IN}{\ensuremath{\mathbb{N}}}
\newcommand{\IR}{\ensuremath{\mathbb{R}}}
\newcommand{\cA}{\ensuremath{\mathcal{A}}}
\newcommand{\cD}{\ensuremath{\mathcal{D}}}
\newcommand{\cF}{\ensuremath{\mathcal{F}}}
\newcommand{\cH}{\ensuremath{\mathcal{H}}}
\newcommand{\cK}{\ensuremath{\mathcal{K}}}
\newcommand{\cL}{\ensuremath{\mathcal{L}}}
\newcommand{\cN}{\ensuremath{\mathcal{N}}}
\newcommand{\cR}{\ensuremath{\mathcal{R}}}
\renewcommand{\subset}{\ensuremath{\subseteq}}
\renewcommand{\supset}{\ensuremath{\supseteq}}
\newcommand{\norm}[1]{\left\lVert#1\right\rVert}
\newcommand{\abs}[1]{\left\lvert#1\right\rvert}
\DeclarePairedDelimiterX{\inner}[2]{\langle}{\rangle}{#1, #2}
\newcommand{\ten}{\hat{\otimes}}
\begin{document}
\author[Lenz]{Daniel Lenz}
\address[Lenz]{Institute of Mathematics, Friedrich Schiller University Jena, Germany}
\email{daniel.lenz@uni-jena.de}

\author[Weinmann]{Timon Weinmann}
\address[Weinmann]{Department of Mathematics and Computer Science, St. Petersburg State University, Russia}
\email{st082214@student.spbu.ru}
\author[Wirth]{Melchior Wirth}
\address[Wirth]{Institute of Mathematics, Friedrich Schiller University Jena, Germany\\Address at the time of publication: IST Austria, Klosterneuburg, Austria}
\email{melchior.wirth@ist.ac.at}

\title[Self-Adjoint Extensions of Bipartite Hamiltonians]{Self-Adjoint Extensions of Bipartite Hamiltonians}
\begin{abstract}
We compute the deficiency spaces of operators of the form $H_A{\hat{\otimes}} I + I{\hat{\otimes}} H_B$, for symmetric $H_A$ and self-adjoint $H_B$.  This enables us to construct self-adjoint extensions (if they exist) by means of von Neumann's theory.
The structure of the deficiency spaces for this case was asserted already in \cite{IMPP14}, but only proven under the restriction of $H_B$ having discrete, non-degenerate spectrum.
\end{abstract}
\maketitle

\setcounter{tocdepth}{1}

\section*{Introduction}\label{sec:intro}

In quantum mechanics, the dynamics of a system is governed by the Schrödinger equation
\begin{equation*}
\partial_t \psi_t=-iH\psi_t,
\end{equation*}
where $H$ is a self-adjoint operator on a Hilbert space $\mathcal{H}$, called the Hamiltonian, and $\psi_t\in \mathcal{H}$ is the wave function at time $t$. Its time evolution is given by
\begin{equation*}
\psi_t=e^{-itH}\psi_0.
\end{equation*}

In many situations however, physical reasoning yields merely a symmetric, rather than self-adjoint, operator,  defined on a subspace of sufficiently regular functions. It is then natural to ask whether this operator has self-adjoint extensions, and if so, how many.

This question was completely answered by von Neumann, whose extension theory states the following: A symmetric operator $H$ has self-adjoint extensions if and only if the dimensions of the deficiency spaces $\cN(H^\ast-i)$ and $\cN(H^\ast+i)$ coincide, and in this case, the self-adjoint extensions of $H$ are parametrized by the unitary operators from $\cN(H^\ast-i)$ to $\cN(H^\ast+i)$.

Now, given two systems $A$ and $B$, we can consider the composite system $AB$. It is modeled on the tensor product of the Hilbert spaces of the individual systems $A$ and $B$. In the simplest case when there is no interaction between $A$ and $B$, the time evolution of the composite system is separable, that is,
\begin{align*}
\psi_t=(e^{-itH_A}\ten e^{-itH_B})\psi_0.
\end{align*}
The corresponding Hamiltonian is 
\begin{align*}
H_{AB}=\overline{H_A\hat\otimes \II+\II\hat\otimes H_B}.
\end{align*}
As such, operators of this form are the prototype of Hamiltonians governing the time evolution of composite systems with interactions and are therefore key in understanding the self-adjoint realizations of Hamiltonians of quantum many-particle systems and in particular open quantum systems \cite{BP02,Dav76}.

In the light of the discussion above, one is lead to study self-adjoint extensions of operators of the form $H_A\otimes\II+\II\otimes H_B$ when $H_A$ and $H_B$ are merely symmetric (in general, the time evolution generated by such a self-adjoint extension will not be separable).

In the case when one of the operators is self-adjoint, this problem was considered by Ibort, Marmo and Pérez-Pardo. They state the following result (\cite[Theorem 2.3]{IBB}, \cite[Theorem 1]{IMPP14}).

\begin{theorem*}
Let $H_A$ be a symmetric operator on $\cH_A$,  $H_B$ a self-adjoint operator on $\cH_B$ and define $H_{AB}$ on $\cH_{AB}=\cH_A\ten \cH_B$ by
$$H_{AB}=H_A\ten\II+\II\ten H_B.$$ Let $\mathcal{N}_{A\pm}=\cN(H_A^\ast\mp i)$ be the deficiency spaces of system $H_A$. The deficiency spaces $\mathcal{N}_{AB\pm}=\cN(H_{AB}^\ast\mp i)$ of $H_{AB}$ then satisfy $$\mathcal{N}_{AB\pm}\simeq\mathcal{N}_{A\pm}\ten\cH_B.$$
\end{theorem*}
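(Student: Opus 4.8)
\emph{Proof proposal.} The plan is to diagonalise the self-adjoint operator $H_B$ and analyse $H_{AB}$ fibrewise over the spectrum of $H_B$. Passing to the closure changes neither $H_A^\ast$ nor the deficiency spaces, so we may assume $H_A$ is closed; and since $H_{AB}^\ast=(\overline{H_{AB}})^\ast$ we work throughout with $\overline{H_{AB}}$, where $H_{AB}=H_A\ten\II+\II\ten H_B$ is understood on the algebraic tensor product $\cD(H_A)\odot\cD(H_B)$. By the spectral theorem, realise $\cH_B=\int^\oplus\cH_\lambda\,d\mu(\lambda)$ with $H_B$ acting as multiplication by $\lambda$; tensoring in the fixed space $\cH_A$ gives $\cH_{AB}\simeq\int^\oplus(\cH_A\ten\cH_\lambda)\,d\mu(\lambda)$, under which $\II\ten H_B$ is again multiplication by $\lambda$.

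The technical heart is to establish that $\overline{H_{AB}}$ is decomposable along this direct integral, namely
\begin{equation*}
\overline{H_A\ten\II+\II\ten H_B}=\int^\oplus\bigl(\overline{H_A\ten\II_{\cH_\lambda}}+\lambda\bigr)\,d\mu(\lambda),
\end{equation*}
where $\overline{H_A\ten\II_{\cH_\lambda}}$ denotes the closure of $H_A\otimes I$ on $\cH_A\ten\cH_\lambda$ --- via an orthonormal basis of $\cH_\lambda$ simply a direct sum of copies of the (closed) operator $H_A$. The inclusion ``$\subseteq$'' is immediate by evaluating both sides on $\cD(H_A)\odot\cD(H_B)$. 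For ``$\supseteq$'' one must show that $\cD(H_A)\odot\cD(H_B)$ is a core for the right-hand side: given $\psi$ in its domain, first apply the spectral truncations $\II\ten\chi_{[-N,N]}(H_B)$ so that the $\lambda$-support becomes bounded --- on such a subspace $\II\ten H_B$ is bounded, so the graph norm of the right-hand side is equivalent to that of $\overline{H_A\ten\II}$ --- and then approximate in the $\cH_A$-variable, using that $\cD(H_A)\odot\cH_B$ is a core for $\overline{H_A\ten\II}$ (clear from the basis decomposition) and truncating the approximants in $\lambda$ as well. Taking adjoints --- using that the adjoint of a decomposable operator is the direct integral of the fibrewise adjoints, and that $(\overline{H_A\ten\II_{\cH_\lambda}})^\ast=\overline{H_A^\ast\ten\II_{\cH_\lambda}}$ --- one obtains
\begin{equation*}
H_{AB}^\ast=\int^\oplus\bigl(\overline{H_A^\ast\ten\II_{\cH_\lambda}}+\lambda\bigr)\,d\mu(\lambda).
\end{equation*}
The measurability required for these direct-integral manipulations is routine here, since only the multiplicity space $\cH_\lambda$ and the scalar $\lambda$ vary.

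Consequently, using that the null space of $\overline{T\ten\II_{\cH_\lambda}}$ equals $\cN(T)\ten\cH_\lambda$ for closed $T$,
\begin{align*}
\cN_{AB\pm}&=\cN(H_{AB}^\ast\mp i)=\int^\oplus\cN\!\bigl(\overline{(H_A^\ast-(\pm i-\lambda))\ten\II_{\cH_\lambda}}\bigr)\,d\mu(\lambda)\\
&=\int^\oplus\bigl(\cN(H_A^\ast-(\pm i-\lambda))\ten\cH_\lambda\bigr)\,d\mu(\lambda).
\end{align*}
Since $\Im(\pm i-\lambda)=\pm1\neq0$ for every $\lambda\in\IR$, each integrand is a deficiency space of $H_A$ evaluated at a point of the \emph{fixed} open half-plane $\{\Im z\gtrless0\}$. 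Here one invokes the classical fact that the deficiency index of a symmetric operator is constant on each open half-plane --- more precisely, that $z\mapsto\cN(H_A^\ast-z)$ is a continuous (indeed real-analytic) field of subspaces there. Over the simply connected half-plane a standard argument of analytic perturbation theory then produces a norm-continuous, hence measurable, family of unitaries $V_\lambda\colon\cN_{A\pm}\to\cN(H_A^\ast-(\pm i-\lambda))$, and $\int^\oplus(V_\lambda\ten\II_{\cH_\lambda})\,d\mu(\lambda)$ is a unitary from
\[
\cN_{A\pm}\ten\cH_B=\cN_{A\pm}\ten\!\int^\oplus\!\cH_\lambda\,d\mu(\lambda)=\int^\oplus\bigl(\cN_{A\pm}\ten\cH_\lambda\bigr)\,d\mu(\lambda)
\]
onto $\cN_{AB\pm}$, which is the assertion.

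I expect the main obstacle to be the decomposability statement for $\overline{H_{AB}}$, and within it the core property of $\cD(H_A)\odot\cD(H_B)$: this is precisely the step that genuinely requires the direct-integral machinery rather than a plain direct sum, and hence exactly what the special case treated in \cite{IMPP14} --- where $H_B$ has discrete, non-degenerate spectrum --- is able to avoid. By contrast, the trivialisation of the deficiency field is a routine repackaging of standard analytic perturbation theory.
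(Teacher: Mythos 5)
Your proposal is correct in outline and follows the same two-step strategy as the paper --- diagonalise $H_B$, compute $H_{AB}^\ast$ fibrewise over the spectrum, then trivialise the resulting measurable field of deficiency spaces --- but it executes both steps differently. For the first step, the paper uses the multiplication-operator form of the spectral theorem, so that $\cH_{AB}$ becomes the Bochner space $L^2(\Omega;\cH_A)$ with \emph{constant} fibre $\cH_A$ and $\varphi(\omega)$ in place of your $\lambda$; this sidesteps direct integrals with varying multiplicity spaces $\cH_\lambda$ and the reduction theory for unbounded decomposable operators (whose adjoint formula you invoke but would still have to prove or cite precisely). Your core argument via the truncations $\II\ten\chi_{[-N,N]}(H_B)$ is essentially the paper's argument with the sets $E_k=\{\abs{\varphi}\le k\}$, so that part matches, though in your formulation it carries the extra burden of verifying measurability of the field of fibre operators. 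For the second step you replace the paper's Section \ref{sec:measurable} entirely: instead of an explicit measurable selection of orthonormal bases of $\cN(H_A^\ast-z)$ by a modified Gram--Schmidt procedure, you use that $z\mapsto\cR(H_A-\bar z)$ is locally Lipschitz in the gap metric on each half-plane (from $\norm{(H_A-\bar w)f-(H_A-\bar z)f}\le\abs{w-z}\,\norm{f}\le\tfrac{\abs{w-z}}{\abs{\Im z}}\norm{(H_A-\bar z)f}$), so the projections onto the deficiency spaces vary norm-continuously, and a chained Sz.-Nagy/Kato transformation along the line $\{\pm i-\lambda:\lambda\in\IR\}$ trivialises the field. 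This is a legitimate and arguably slicker alternative --- continuity is strictly stronger than the measurability the paper needs --- but ``routine repackaging of standard analytic perturbation theory'' conceals the actual chaining construction, and the claimed real-analyticity is neither needed nor obvious. Net effect: your route buys a cleaner conceptual picture at the price of importing direct-integral machinery; the paper's route is longer but elementary and self-contained.
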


However, in their proof they restrict themselves to the case when the spectrum of $H_B$ consists solely of simple eigenvalues, and only state that the general case can be treated by a judicious use of the spectral theorem. In this article, we give a complete proof of he preceding theorem for general self-adjoint $H_B$.

In the case when the operator $H_B$ is (a restriction of) the Laplacian, the assumption of discrete spectrum covers only compact geometries, while our proof justifies the result also for non-compact geometries. For example, if the system $B$ consists of a single free particle in $\IR^n$, then the Hamiltonian $H_B$ is the Laplacian, which is essentially self-adjoint on $C_c^\infty(\IR^n)$, but the spectrum of its closure is not discrete.

Let us outline the proof strategy. First, the spectral theorem allows us to view $H_B$ as a multiplication operator $M_\phi$ on $L^2(\Omega,\mu)$ for some measure $\mu$. The tensor product $\cH_A\hat\otimes L^2(\Omega,\mu)$ can be identified with the Bochner Lebesgue space $L^2(\Omega,\mu;\cH_A)$. Under this identification, the operator $H_{AB}$ acts as
\begin{align*}
(H_{AB}\psi)(\omega)=H_A\psi(\omega)+\phi(\omega)\psi(\omega).
\end{align*}
These identifications make it possible to reduce the asserted isomorphism for the deficiency spaces to a similar computation as in the proof of Ibort, Marmo and Pérez-Pardo.

There are two main difficulties to overcome. First, while the identification of tensor products of operators on $\cH_A\ten L^2(\Omega,\mu)$ with operators on $L^2(\Omega,\mu;\cH_A)$ is fairly obvious in the bounded case, we deal with unbounded operators and, as usual, more care is required to determine the correct domains. This is done in Section \ref{sec:adjoint}.

Second, the asserted isomorphism for the deficiency spaces comes from fiberwise isomorphisms. Then one has to prove that these isomorphisms can be chosen so that they depend measurably on the base point. In the discrete case this is of course obvious, but it becomes non-trivial in the general case at hand. This problem is resolved in Section \ref{sec:measurable}. Finally, the the proof is completed in Section \ref{sec:deficiency}.

The self-adjoint extensions of operators of the form $H_{AB}$ as above were also described in \cite{BBMNP18}, using a completely different approach based on boundary triplets. More precisely, given a boundary triplet for $H_A^\ast$, they construct a boundary triplet for $H_{AB}^\ast$ that respects the tensor structure.

\subsection*{Acknowledgments}
M. W. gratefully acknowledges financial support by the German Academic Scholarship Foundation (Studienstiftung des deutschen Volkes).

T.W. thanks PAO Gazprom Neft (ПАО Газпром нефть) and ORISA GmbH for their financial support in form of scholarships during his Master's and Bachelor's studies respectively.

The authors want to thank Mark Malamud for pointing out the reference \cite{BBMNP18} to them.

This work was supported by the Ministry of Science and Higher Education of the Russian Federation, agreement № 075-15-2019-1619.
\section{Preliminaries}\label{sec:prelim}
Throughout this article, all Hilbert spaces are assumed to be separable. An operator $A$ is called symmetric if it is densely defined and $A\subseteq A^\ast$.
\begin{df}[Deficiency Spaces]
Let $A$ be a symmetric operator. We call
\begin{align*}
\cN_\pm(A)=\cR(A\pm i)^\perp
\end{align*}
the \emph{deficiency spaces} of $A$ and their dimensions
\begin{align*}
d_\pm(A)=\dim \cN_\pm,
\end{align*}
the \emph{deficiency indices} of $A$.
\end{df}
In case we can rule out confusion, we shall write $\cN_\pm$ instead of $\cN_\pm(A)$.

Next we state the central theorem of von Neumann's extension theory (see, for instance, 
\cite{Wei}[Chapter 10] for an extensive treatment). In the following, $U\dot{+}V$ denotes the algebraic direct sum of subspaces $U$ and $V$, not necessarily orthogonal, while $U\oplus V$ is reserved for the orthogonal direct sum of closed subspaces.

\begin{thm}[von Neumann's Extension Theorem]
Let $A$ be a closed, symmetric operator on $\cH$. Then $A$ has self-adjoint extensions if and only if $d_+(A)=d_-(A)$. In this case, let $U\colon      \cN_+\to\cN_-$ be unitary. Define the operator $B$ by
\begin{align*}
\cD(B)&=\cD(A)\dot{+}(U+\II)\cN_+\,& B(f+Ug+g)=Af+ig-iUg.
\end{align*}
Then $A\subset B=B^*$,
and all self-adjoint extensions of $A$ arise this way.\qed
\end{thm}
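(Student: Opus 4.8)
The plan is to run the classical Cayley transform argument (see \cite{Wei}[Chapter 10]), keeping in mind that $\cN_\pm = \cR(A\pm i)^\perp = \ker(A^\ast\mp i)$. The starting point is the identity $\norm{(A\pm i)f}^2 = \norm{Af}^2 + \norm{f}^2$ for $f\in\cD(A)$, valid because $A$ is symmetric; it shows that the Cayley transform $V := (A-i)(A+i)^{-1}$ is an isometry from $\cR(A+i)$ onto $\cR(A-i)$, that $\II-V$ is injective with $\cR(\II-V)=\cD(A)$, and that $A = i(\II+V)(\II-V)^{-1}$. Since $A$ is closed, $\cR(A\pm i)$ are closed subspaces of $\cH$. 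I would then record the dictionary: symmetric (resp.\ closed symmetric, resp.\ self-adjoint) extensions $B\supseteq A$ correspond bijectively, via $B\mapsto V_B=(B-i)(B+i)^{-1}$, to isometric extensions of $V$ (resp.\ with closed domain and range, resp.\ unitary on all of $\cH$); in particular any symmetric $B\supseteq A$ satisfies $B\subseteq A^\ast$.

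Next I would use the orthogonal decompositions $\cH=\cR(A+i)\oplus\cN_+=\cR(A-i)\oplus\cN_-$. Every unitary $W$ on $\cH$ extending $V$ must map $\cN_+$ onto $\cN_-$, so it is of the form $W=V\oplus U$ with $U\colon\cN_+\to\cN_-$ unitary, and conversely each such $U$ yields one. Such a $U$ exists if and only if $\dim\cN_+=\dim\cN_-$, i.e.\ $d_+(A)=d_-(A)$. Combined with the dictionary, this proves both the existence criterion and the fact that the self-adjoint extensions are in bijection with the unitaries $\cN_+\to\cN_-$, and that \emph{every} self-adjoint extension of $A$ arises from some $U$ (namely, the restriction of $V_B$ to $\cN_+$).

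It then remains to unwind $B=i(\II+W)(\II-W)^{-1}$, $\cD(B)=\cR(\II-W)$, with $W=V\oplus U$, into the additive form in the statement. Writing a vector of $\cR(\II-W)$ as $(\II-W)\big((A+i)f+g\big)$ with $f\in\cD(A)$, $g\in\cN_+$, one computes $(\II-W)\big((A+i)f+g\big)=2if+(g-Ug)$ and $i(\II+W)\big((A+i)f+g\big)=2iAf+i(g+Ug)$; after rescaling $f$ and the harmless substitution $U\mapsto -U$ this is exactly $\cD(B)=\cD(A)\dot{+}(U+\II)\cN_+$ and $B(f+Ug+g)=Af+ig-iUg$. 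As an independent sanity check I would verify self-adjointness of this $B$ directly from von Neumann's formula $\cD(A^\ast)=\cD(A)\dot{+}\cN_+\dot{+}\cN_-$: symmetry is immediate since $\langle A^\ast x,y\rangle-\langle x,A^\ast y\rangle=2i(\langle g_{x+},g_{y+}\rangle-\langle g_{x-},g_{y-}\rangle)$ for $x=f_x+g_{x+}+g_{x-}$, $y=f_y+g_{y+}+g_{y-}$, and this vanishes on $\cD(B)$ because $U$ is isometric; and $\cD(B^\ast)\subseteq\cD(B)$ because an $h=f+g_++g_-\in\cD(A^\ast)$ for which this boundary form against every $g+Ug$, $g\in\cN_+$, vanishes must satisfy $g_-=Ug_+$.

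The main obstacle is infrastructural rather than a single clever idea: one needs the facts that (i) for a closed symmetric $A$ the ranges $\cR(A\pm i)$ are closed with $\cH=\cR(A+i)\oplus\cN_+$, (ii) the Cayley transform intertwines the relevant classes of extensions of $A$ with the corresponding classes of extensions of $V$, and (iii) von Neumann's formula $\cD(A^\ast)=\cD(A)\dot{+}\cN_+\dot{+}\cN_-$ (equivalently, orthogonality of the three summands in the graph inner product). All three are standard but require the usual unbounded-operator care; granted them, the explicit parametrization in the statement follows by unwinding $i(\II+W)(\II-W)^{-1}$.
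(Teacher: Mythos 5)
The paper does not prove this theorem at all: it is quoted as a known result with an immediate \qed and a pointer to \cite{Wei}[Chapter 10], and your Cayley-transform argument is exactly the standard proof given there, correctly sketched (including the unwinding of $i(\II+W)(\II-W)^{-1}$ into the additive form, where the sign flip $U\mapsto -U$ is indeed harmless). The only step you leave implicit is why $\II-W$ is injective for a unitary extension $W$ of $V$ --- namely $\cN(\II-W)=\cR(\II-W)^\perp\subseteq\cD(A)^\perp=\{0\}$ --- but that is part of the standard dictionary you invoke, so the proposal is correct and matches the proof the paper relies on.
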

This theorem yields a one-to-one correspondence between the set of self-adjoint extensions of the operator $A$ and the set of unitary operators $\cN_+\to\cN_-$, hence reduces the problem of finding self-adjoint extensions of $A$ to constructing unitary operators between the deficiency spaces. We will therefore be interested in computing the deficiency spaces of symmetric closed operators.

We also recall the spectral theorem in multiplication operator form (see for example \cite{Wei}[Chapter 8]).
\begin{thm}[Spectral Theorem]
Let $H$ be a self-adjoint operator on the Hilbert space $\cH$. There exists a $\sigma$-finite measure space $(X,\mu)$, a measurable function $\varphi\colon X\to \IR$ and a unitary operator $U\colon \cH\to L^2(X,\mu)$ such that
\begin{equation*}
H=U^*M_\varphi U.
\end{equation*}
\end{thm}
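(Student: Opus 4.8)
The plan is to reduce the assertion to the spectral theorem for a single bounded normal operator possessing a cyclic vector, where the multiplication representation can be written down by hand via the Riesz representation theorem. Concretely, since $H$ is self-adjoint, the resolvent $T:=(H+i)^{-1}$ is a bounded, injective, normal operator on $\cH$, and $H=T^{-1}-i$ with $\cD(H)=\Ran(T)$. Hence it suffices to produce a $\sigma$-finite measure space $(X,\mu)$, a bounded measurable $\rho\colon X\to\IC$, and a unitary $U\colon\cH\to L^2(X,\mu)$ with $T=U^\ast M_\rho U$. Injectivity of $T$ forces $\rho\ne 0$ $\mu$-a.e., so $\varphi:=1/\rho-i$ is a well-defined measurable function, and then $H=U^\ast M_\varphi U$, the identification of $\cD(H)=\Ran(T)$ with $\cD(M_\varphi)=\{f:\varphi f\in L^2(X,\mu)\}$ being a direct computation using $\Ran(M_\rho)=\{f:f/\rho\in L^2(X,\mu)\}$. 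That $\varphi$ may be taken real-valued follows from self-adjointness: evaluating $T^{-1}+(T^\ast)^{-1}=2H$ in the multiplication representation gives $\varphi=\Re(1/\rho)$ up to a $\mu$-null set.

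\textbf{The bounded normal case with a cyclic vector.} For a bounded normal operator $T$ I would first set up the continuous functional calculus $C(\sigma(T))\to\cB(\cH)$, $f\mapsto f(T)$: on polynomials in $z$ and $\bar z$ it is prescribed by $z\mapsto T$, $\bar z\mapsto T^\ast$, it is isometric because $\|p(T)\|$ equals the spectral radius of the normal operator $p(T)$, which in turn equals $\|p\|_{C(\sigma(T))}$, and one extends by density using the Stone--Weierstrass theorem. For $\psi\in\cH$ the functional $f\mapsto\inner{\psi}{f(T)\psi}$ is positive, so the Riesz representation theorem provides a finite Borel measure $\mu_\psi$ on $\sigma(T)$ with $\inner{\psi}{f(T)\psi}=\int f\,d\mu_\psi$. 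If $\psi$ is cyclic, meaning $\{f(T)\psi:f\in C(\sigma(T))\}$ is dense in $\cH$, then $\|f(T)\psi\|^2=\int|f|^2\,d\mu_\psi$ shows that $f(T)\psi\mapsto f$ extends to a unitary $W\colon\cH\to L^2(\sigma(T),\mu_\psi)$, and $Tf(T)=(\mathrm{id}\cdot f)(T)$ gives $T=W^\ast M_{\mathrm{id}}W$ with $\mathrm{id}(z)=z$.

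\textbf{Cyclic decomposition and gluing.} In general, a standard exhaustion argument (or an application of Zorn's lemma), using that $\cH$ is separable, yields an at most countable orthogonal decomposition $\cH=\bigoplus_n\cH_n$ into subspaces invariant under $T$ and $T^\ast$, each carrying a cyclic vector $\psi_n$ for the restriction of $T$. Applying the previous step on each summand gives unitaries $W_n\colon\cH_n\to L^2(\sigma(T),\mu_{\psi_n})$. Let $X$ be the disjoint union of countably many copies $X_n$ of $\sigma(T)$, let $\mu$ restrict to $\mu_{\psi_n}$ on $X_n$ (so $\mu$ is $\sigma$-finite, being a countable sum of finite measures supported on disjoint pieces), and let $\rho\colon X\to\IC$ be $z\mapsto z$ on each $X_n$; then $L^2(X,\mu)\cong\bigoplus_n L^2(\sigma(T),\mu_{\psi_n})$ and $U:=\bigoplus_n W_n$ satisfies $T=U^\ast M_\rho U$. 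Feeding this into the first step completes the proof. The analytic heart of the argument — the one step that is more than bookkeeping — is the continuous functional calculus, specifically the identity $\|p(T)\|=\|p\|_{C(\sigma(T))}$ for normal $T$ together with Stone--Weierstrass; the cyclic decomposition and the domain identification in the passage to the unbounded $H$ are routine, the latter provided one keeps track that $\varphi$ is real-valued and that $\{f:\varphi f\in L^2\}$ is exactly the image of $\cD(H)$, which is where self-adjointness rather than mere symmetry of $H$ is used.
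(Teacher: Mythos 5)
This theorem is quoted in the paper as standard background and is not proved there --- the authors simply cite \cite{Wei}[Chapter 8] --- so there is no in-paper proof to compare against. Your argument is the classical multiplication-operator proof (reduction to the bounded case via the resolvent, continuous functional calculus plus the Riesz representation theorem on cyclic subspaces, then a countable cyclic decomposition glued over a disjoint union), and as a sketch it is essentially correct, including the domain bookkeeping $U\cD(H)=\Ran(M_\rho)=\cD(M_\varphi)$. Two points deserve slightly more care than you give them. First, the identity $\norm{p(T)}=\norm{p}_{C(\sigma(T))}$ for polynomials in $z$ \emph{and} $\bar z$ is not just the spectral radius formula: one needs the spectral mapping theorem for such polynomials, i.e.\ the identification of the character space of the commutative $C^*$-algebra generated by $T$ and $\II$ with $\sigma(T)$; this Gelfand-theoretic step is the real content hidden in your phrase ``which in turn equals $\norm{p}_{C(\sigma(T))}$''. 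Second, the reality of $\varphi$ is obtained more directly from the observation that $\sigma(T)\setminus\{0\}\subset\{(\lambda+i)^{-1}\colon\lambda\in\IR\}$, so that $1/\rho-i$ is real $\mu$-a.e.\ wherever $\rho\neq 0$; your route via $T^{-1}+(T^\ast)^{-1}=2H$ works but requires checking that both summands are defined on the common domain $\Ran(T)=\Ran(T^\ast)=\cD(H)$, which again uses self-adjointness. Neither point is a gap, only a place where the sketch leans on standard facts it does not spell out.
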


Let $\cH$ and $\cK$ be Hilbert spaces. We denote by $\cH\otimes\cK$ their algebraic tensor product and by $\cH\ten\cK$ their Hilbert space tensor product.
\begin{df}
For operators $A$ on $\cH$ and $B$ on $\cK$, we define the operator $A\otimes B$ on $\cH\ten\cK$ by
\begin{align*}
\cD(A\otimes B)&=\cD(A)\otimes\cD(B),\\
A\otimes B(\sum_j f_j\otimes g_j)&=\sum_j Af_j\otimes Bg_j.
\end{align*}
 If $A$ and $B$ are closable, then so is $A\otimes B$, and we denote its closure by $A\ten B$.
\end{df}

Let $(\Omega,\cF,\mu)$ be a measure space, $\cH$ a Hilbert space and 
$f\colon \Omega\to \cH$ measurable. We write $L^2(\Omega;\cH)$ for the Bochner-Lebesgue space of square-integrable $\cH$-valued functions.

One can construct a unitary operator $\cH\ten L^2(\Omega)\to L^2(\Omega;\cH)$ by linearly and continuously extending $e\otimes f \mapsto (\omega\mapsto f(\omega)e)$, thus  justifying the identification of $\cH\ten L^2(\Omega)$ with $L^2(\Omega;\cH)$.

\section{Finding The Adjoint}\label{sec:adjoint}
Let $H_A$ be a symmetric operator on $\cH_A$ and let $H_B$ be a self-adjoint operator on $\cH_B$. We define the operator $H_{AB}$ on $\cH_A\ten\cH_B$ by $H_{AB}=\overline{H_A\ten I + I \ten H_B}$.

The straightforward way to compute the deficiency spaces $\cN_\pm(H_{AB})=\cN(H_{AB}^*\mp i)$ of the operator $H_{AB}$ is, of course, to compute the adjoint of $H_{AB}$ and then the kernel of $H_{AB}^*\mp i$. 

First, note the following: 
\begin{align*}
\cD(H_A\ten I + I \ten H_B)&=\cD(H_A\ten I)\cap \cD(I \ten H_B)\\
&\supset \cD(H_A\otimes I)\cap \cD(I \otimes H_B)\\
&=\cD(H_A)\otimes\cD(H_B).
\end{align*}
Hence $H_A\ten I + I \ten H_B$ is densely-defined and it makes sense to consider its adjoint. Since $(H_A\ten I)^*=(H_A\otimes I)^*\supset H_A^*\otimes I$, we have $(H_A\ten I)^*\supset H_A^*\ten I$ and similarly $(I\ten H_B)^*\supset I\ten H_B$.
Combined, this means that $(H_A\ten I + I \ten H_B)^*\supset H_A^*\ten I+I\ten H_B$, and furthermore $(H_A\ten I + I \ten H_B)^*\supset \overline{H_A^*\ten I+I\ten H_B}$.
In fact, we will see that these two operators are equal. 

Before proving this, let us simplify our notation.
Without loss of generality we can assume, due to the spectral theorem, that $\cH_B=L^2(\Omega)$ and $H_B=M_\varphi$ 
for some $\sigma$-finite measure space $\Omega$ and a measurable function $\varphi\colon\Omega\to\IR$. Instead of $H_A$ and $\cH_A$, we will simply write $H$ and $\cH$ respectively.
As we have seen, we can identify $\cH\ten L^2(\Omega)$ with the Bochner-Lebesgue space $L^2(\Omega;\cH)$.
\begin{thm}\label{DAI}
Let $K$ be a closed operator on $\cH$ and let $(\Omega,\cF,\mu)$ be a measure space.
The domain of the operator $K\ten I$ on $\cH\ten L^2(\Omega)$ is given by
\begin{equation*}
\cD(K\ten \II)=\{f\in L^2(\Omega;\cH)\colon f(\omega)\in\cD(K)\text{ a.e., }\omega\mapsto K(f(\omega))\in L^2(\Omega;\cH)\}
\end{equation*}
and the operator acts as
\begin{equation*}
((K\ten \II) f)(\omega)=K(f(\omega)),
\end{equation*}
for $f\in \cD(K\ten \II)$ and a.e. $\omega\in\Omega$.
\end{thm}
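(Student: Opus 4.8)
The strategy is to prove both inclusions between $\cD(K \ten \II)$ and the set
$$
\cD := \{f\in L^2(\Omega;\cH)\colon f(\omega)\in\cD(K)\text{ a.e.},\ \omega\mapsto K(f(\omega))\in L^2(\Omega;\cH)\},
$$
and to check that the action is as claimed.

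\smallskip
\emph{Step 1: $\cD \supseteq \cD(K \ten \II)$, i.e.\ the elements of the graph-closure already lie in $\cD$, and the action is correct there.} On elementary tensors $e\otimes g$ with $e\in\cD(K)$, $g\in L^2(\Omega)$, one has $(e\otimes g)(\omega)=g(\omega)e$, which lies in $\cD(K)$ for a.e.\ $\omega$, and $((K\otimes\II)(e\otimes g))(\omega)=g(\omega)Ke=K((e\otimes g)(\omega))$; so the claimed formula holds on $\cD(K)\otimes L^2(\Omega)$. Now take $f\in\cD(K\ten\II)$ and a sequence $f_n\in\cD(K)\otimes L^2(\Omega)$ with $f_n\to f$ and $(K\otimes\II)f_n\to (K\ten\II)f$ in $L^2(\Omega;\cH)$. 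Pass to a subsequence so that $f_n(\omega)\to f(\omega)$ and $((K\otimes\II)f_n)(\omega)=K(f_n(\omega))\to ((K\ten\II)f)(\omega)$ for a.e.\ $\omega$. Since $K$ is closed, for a.e.\ $\omega$ we conclude $f(\omega)\in\cD(K)$ and $K(f(\omega))=((K\ten\II)f)(\omega)$, which is in $L^2(\Omega;\cH)$. Hence $f\in\cD$ and the action agrees.

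\smallskip
\emph{Step 2: $\cD\subseteq\cD(K\ten\II)$.} Fix $f\in\cD$. The idea is to approximate $f$ in graph norm by elements of $\cD(K)\otimes L^2(\Omega)$, exploiting that $K$ is \emph{closed} (not merely closable): equip $\cD(K)$ with the graph inner product, making it a Hilbert space $\cH_K$, and observe that the map $\iota\colon\cH_K\to\cH\oplus\cH$, $x\mapsto(x,Kx)$, is an isometric embedding onto the (closed) graph. The hypothesis on $f$ says precisely that $\omega\mapsto f(\omega)$ defines an element of $L^2(\Omega;\cH_K)$ (measurability of $\omega\mapsto (f(\omega),Kf(\omega))$ into the graph follows from measurability of the two components, and square-integrability is the hypothesis). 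Since $\cH$ is separable, so is $\cH_K$, and simple $\cH_K$-valued functions are dense in $L^2(\Omega;\cH_K)$; these simple functions lie in $\cD(K)\otimes L^2(\Omega)$ (as finite sums $\sum_j x_j\otimes \mathbf 1_{A_j}$ with $x_j\in\cD(K)$). Choosing such a sequence $f_n\to f$ in $L^2(\Omega;\cH_K)$, one gets simultaneously $f_n\to f$ in $L^2(\Omega;\cH)$ and $K\circ f_n=(K\otimes\II)f_n\to K\circ f$ in $L^2(\Omega;\cH)$; hence $f\in\cD(K\ten\II)$ with the stated action.

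\smallskip
\emph{Main obstacle.} The delicate point is the measurability bookkeeping in Step 2: one must be careful that $\omega\mapsto Kf(\omega)$ being measurable and square-integrable genuinely promotes $f$ to a Bochner-measurable $\cH_K$-valued map, so that the density of simple functions in $L^2(\Omega;\cH_K)$ applies. This rests on separability of $\cH$ (hence of $\cH_K$) and on the Pettis measurability theorem, together with closedness of $K$ to identify $\cH_K$ with a closed subspace of $\cH\oplus\cH$. Everything else (Step 1 and the elementary-tensor computation) is routine once this identification is in place.
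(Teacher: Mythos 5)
Your proposal is correct and follows essentially the same route as the paper: both arguments equip $\cD(K)$ with the graph inner product to obtain a Hilbert space, identify $\cD(K\ten\II)$ (the graph-norm closure of $\cD(K)\otimes L^2(\Omega)$) with the Bochner space $L^2(\Omega;\cD(K))$, and establish the pointwise action by passing to an a.e.-convergent subsequence and invoking closedness of $K$. If anything, your Step 2 is slightly more careful than the paper's, which asserts the identification $L^2(\Omega;\cD(K))=\cD$ by checking only the integrability condition, whereas you make explicit the Pettis-type measurability point that promotes $f$ to a Bochner-measurable $\cH_K$-valued map via the closed-graph embedding into $\cH\oplus\cH$.
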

\begin{proof}
Since $K$ is closed, the space $\cD(K)$ equipped with the inner product $\inner{\cdot}{\cdot}_K$ given by
\begin{equation*}
\inner{f}{g}_K=\inner{f}{g}+\inner{Kf}{Kg}
\end{equation*}
for $f,g\in\cD(A)$, is a Hilbert space.

Since $K\ten I$ is the closure of the operator $K\otimes I$, the domain $\cD(K\ten I)$ is the closure of 
$\cD(K\otimes I)=\cD(K)\otimes L^2(\Omega)$ with respect to the norm $\norm{\cdot}_{K\ten I}=\sqrt{\Vert {\cdot}\Vert^2+\Vert (K\ten I)\cdot\Vert^2}$. 

This norm, however, coincides with the norm defined on the Bochner-Lebesgue space $L^2(\Omega;(\cD(K),\inner{\cdot}{\cdot}_K))=\cD(K)\ten L^2(\Omega)$. Since $\cD(K)\ten L^2(\Omega)$
is defined as the closure of $\cD(K)\otimes L^2(\Omega)$ with respect to $\norm{\cdot}_{K\ten I}$, we have
\begin{equation*}
\cD(K\ten I)=\overline{\cD(K\otimes I)}^{K\ten I}=\overline{\cD(K)\otimes L^2(\Omega)}^{K\ten I}=\cD(K)\ten L^2(\Omega)=L^2(\Omega;\cD(K)).
\end{equation*}
Since $\int \norm{f(\omega)}^2_K d\mu(\omega)=\int \left(\norm{f(\omega)}^2+\norm{Kf(\omega)}^2 \right)d\mu(\omega)<\infty$ if and only if both
$\int \norm{f(\omega)}^2 d\mu(\omega)<\infty$ and $\int \norm{Kf(\omega)}^2 d\mu(\omega)<\infty$, we can make the following identification
\begin{equation*}
L^2(\Omega;\cD(K))=\{f\in L^2(\Omega;\cH)\colon f(\omega)\in\cD(K)\text{ a.e., } \omega\mapsto K(f(\omega))\in L^2(\Omega;\cH)\},
\end{equation*}
proving the statement about the domain. It remains to show how $K\ten I$ acts.

Let $f\in \cD(K\otimes I)=\cD(K)\otimes\cD(M_\psi)$, that is $f$, decomposes as a finite linear combination $f=\sum_k e_k\otimes f_k$. Then
\begin{align*}
(K\ten I f)(\omega)&=\sum_k ((Ke_k)\otimes f_k)(\omega)\\
&=\sum_k f_k(\omega)Ke_k\\
&=\sum_k K(f_k(\omega)e_k)\\
&=K(\sum_k f_k(\omega)e_k)\\
&=K(f(\omega)),
\end{align*}
for almost every $\omega\in\Omega$.

Now let $f\in\cD(K\ten I)$. Since $\cD(K\otimes I )$ is a core for $K\ten I$, i.e. ${\cD(K\otimes M_\psi )}$ is dense in $\cD(K\ten M_\psi )$ with respect to the graph norm,
there is a sequence $(\phi_n)_n$ in $\cD(K\otimes I )=\cD(K)\otimes\cD( I )$ that converges in $\norm{\cdot}_{K\ten I}=\sqrt{\Vert {\cdot}\Vert^2+\Vert (K\ten I)\cdot\Vert^2}$ to $f$.
In particular there is a subsequence $(\phi_{n_l})_l$ of $(\phi_n)_n$ such that $\phi_{n_l}(\omega)\to f(\omega)$ for almost every $\omega\in\Omega$, and a subsequence $(\phi_{n_{l_j}})_j$ of $(\phi_{n_l})_l$ such that almost everywhere we have
$(K\ten I\phi_{n_{l_j}})(\omega)\to(K\ten I f)(\omega)$.
 
Since $(K\ten I\phi_{n_{l_j}})(\omega)= K(\phi_{n_{l_j}}(\omega))$,
this convergence implies that for almost every $\omega$, we have
$K(\phi_{n_{l_j}}(\omega))\to(K\ten I f)(\omega)$.

Hence $K(\phi_{n_{l_j}}(\omega))\to(K\ten I f)(\omega)$ almost everywhere. Closedness of $K$ yields $f(\omega)\in\cD(K)$ and
\begin{equation*}
(K\ten I)(\omega)=\lim_j (K\ten I\phi_{n_{l_j}})(\omega)=\lim_j K(\phi_{n_{l_j}}(\omega))=K(f(\omega))
\end{equation*}
almost everywhere, which concludes the proof.
\end{proof}

\begin{prop}
Let $M_\psi$ be the operator of multiplication by the measurable function $\psi\colon \Omega\to \IC$ on $L^2(\Omega)$. The operator $I\ten M_\psi$ has domain
\begin{equation*}
\cD(I\ten M_\psi)=\{f\in L^2(\Omega;\cH)\colon \omega\mapsto\norm{f(\omega)}\in\cD(M_\psi)\}
\end{equation*}
and acts by 
\begin{equation*}
(M_\psi f)(\omega)=\psi(\omega)f(\omega),
\end{equation*}
almost everywhere.
\end{prop}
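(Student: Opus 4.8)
The plan is to mirror the proof of Theorem \ref{DAI}, with $K$ replaced by $M_\psi$. The one point needing separate attention is the graph Hilbert space of $M_\psi$: since $\norm{g}_{M_\psi}^2=\norm{g}^2+\norm{\psi g}^2=\int_\Omega(1+\abs{\psi}^2)\,\abs{g}^2\,d\mu$ for $g\in\cD(M_\psi)$, the space $(\cD(M_\psi),\inner{\cdot}{\cdot}_{M_\psi})$ is exactly the weighted space $L^2(\Omega,(1+\abs{\psi}^2)\mu)$ (in particular it is complete, as it must be since $M_\psi$ is closed). A one-line computation on elementary tensors shows that the graph inner product of $I\otimes M_\psi$ on $\cH\otimes\cD(M_\psi)$ coincides with the Hilbert tensor inner product of $\cH$ and $(\cD(M_\psi),\inner{\cdot}{\cdot}_{M_\psi})$. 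Hence, exactly as in the proof of Theorem \ref{DAI},
\begin{equation*}
\cD(I\ten M_\psi)=\overline{\cH\otimes\cD(M_\psi)}^{\,I\ten M_\psi}=\cH\ten\cD(M_\psi)=L^2\bigl(\Omega,(1+\abs{\psi}^2)\mu;\cH\bigr),
\end{equation*}
where the last step uses the identification of Hilbert tensor products with Bochner spaces from Section \ref{sec:prelim}, applied to the measure $(1+\abs{\psi}^2)\mu$.

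It remains to unwind the right-hand side. Since $1+\abs{\psi}^2\ge1$, every function in $L^2(\Omega,(1+\abs{\psi}^2)\mu;\cH)$ lies in $L^2(\Omega;\cH)$, and $f\in L^2(\Omega;\cH)$ belongs to the weighted space if and only if $\int_\Omega(1+\abs{\psi(\omega)}^2)\norm{f(\omega)}^2\,d\mu(\omega)<\infty$, that is, if and only if $\omega\mapsto\psi(\omega)\norm{f(\omega)}$ lies in $L^2(\Omega)$, i.e.\ $\omega\mapsto\norm{f(\omega)}\in\cD(M_\psi)$. This is the claimed domain. (Alternatively, one may fix an orthonormal basis $(e_k)_k$ of $\cH$, identify $L^2(\Omega;\cH)$ with the $\ell^2$-direct sum of countably many copies of $L^2(\Omega)$ via $f\mapsto(\inner{f(\cdot)}{e_k})_k$, note that $I\ten M_\psi$ becomes the direct sum of copies of $M_\psi$, and read off the domain using Tonelli's theorem.)

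For the action I would again follow the end of the proof of Theorem \ref{DAI}. On an elementary tensor $f=\sum_k e_k\otimes f_k\in\cH\otimes\cD(M_\psi)$ one computes $\bigl((I\otimes M_\psi)f\bigr)(\omega)=\sum_k f_k(\omega)\psi(\omega)e_k=\psi(\omega)f(\omega)$ for a.e.\ $\omega$. For general $f\in\cD(I\ten M_\psi)$, choose a sequence $(\phi_n)_n$ in the core $\cH\otimes\cD(M_\psi)$ converging to $f$ in the graph norm; passing to a subsequence we may assume $\phi_n(\omega)\to f(\omega)$ and $\bigl((I\ten M_\psi)\phi_n\bigr)(\omega)\to\bigl((I\ten M_\psi)f\bigr)(\omega)$ for a.e.\ $\omega$. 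Since $\bigl((I\ten M_\psi)\phi_n\bigr)(\omega)=\psi(\omega)\phi_n(\omega)\to\psi(\omega)f(\omega)$ a.e., we conclude $\bigl((I\ten M_\psi)f\bigr)(\omega)=\psi(\omega)f(\omega)$ a.e.

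I do not expect a genuine obstacle: the statement is the exact analogue of Theorem \ref{DAI} for the second tensor factor, and it could alternatively be deduced from that theorem via a flip unitary together with a realization of $\cH$ as an $L^2$-space. The only place deserving a little care is the first paragraph — verifying that the graph-norm completion of $\cH\otimes\cD(M_\psi)$ is correctly identified, \emph{as a subspace of} $L^2(\Omega;\cH)$, with the weighted Bochner space $L^2(\Omega,(1+\abs{\psi}^2)\mu;\cH)$, so that the norms match and no functions are gained or lost in passing to the closure; here one uses that $(1+\abs{\psi}^2)\mu$ is again $\sigma$-finite and has the same null sets as $\mu$.
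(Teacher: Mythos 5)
Your proposal is correct, but for the domain computation it takes a genuinely different route from the paper. The paper first establishes the action on elementary tensors and extends it by graph-norm approximation and a.e.\ convergent subsequences (exactly as you do), and then proves the inclusion ``$\supset$'' for the domain by hand: it expands $f$ in an orthonormal basis $(\xi_n)$ of $\cH$ as $f=\sum_n\phi_n(\cdot)\xi_n$, truncates to $f_N=\sum_{n=1}^N\phi_n(\cdot)\xi_n\in\cH\otimes\cD(M_\psi)$, shows $f_N\to f$ and $(I\ten M_\psi)f_N\to\psi f$ via dominated convergence with majorant $\abs{\psi(\omega)}\norm{f(\omega)}$, and invokes closedness of $I\ten M_\psi$. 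You instead transplant the completion argument of Theorem~\ref{DAI} to the second tensor factor: the graph space of $M_\psi$ is the weighted space $L^2(\Omega,(1+\abs{\psi}^2)\mu)$, the graph inner product of $I\otimes M_\psi$ is the tensor inner product of $\cH$ with that space, and hence $\cD(I\ten M_\psi)=L^2(\Omega,(1+\abs{\psi}^2)\mu;\cH)$, which unwinds to the stated domain. This buys a uniform treatment of both factors and delivers both inclusions of the domain simultaneously; the price is the extra care you correctly flag, namely that the abstract graph-norm completion of $\cH\otimes\cD(M_\psi)$ must be realized injectively \emph{inside} $L^2(\Omega;\cH)$, which holds because the graph norm dominates the $L^2(\mu;\cH)$-norm and $(1+\abs{\psi}^2)\mu$ is $\sigma$-finite with the same null sets as $\mu$. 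The paper's truncation argument avoids introducing the weighted measure but relies on dominated convergence and closedness instead. Both arguments are complete.
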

\begin{proof}
To see how the operator acts, consider the following. Let $f\in\cD(I\otimes M_\psi)=\cH\otimes\cD( M_\psi)$, then 
\begin{align*}
(I\ten M_\psi f)(\omega)&=\sum_k (e_k\otimes( M_\psi f_k))(\omega)\\
&=\sum_k \psi(\omega)f_k(\omega)e_k\\
&=\psi(\omega)\sum_k f_k(\omega)e_k\\
&=\psi(\omega)\sum_k f_k(\omega)e_k\\
&=\psi(\omega)f(\omega).
\end{align*}
Now, let $f\in\cD(I\ten M_\psi)$. Since $\cD(I\otimes M_\psi )$ is a core for $I\ten M_\psi$, i.e. ${\cD(I\otimes M_\psi )}$ is dense in $\cD(I\ten M_\psi )$ with respect to the graph norm,
there is a sequence $(\phi_n)_n$ in $\cD(I\otimes M_\psi )=\cD(I)\otimes\cD( M_\psi )$ that converges in $\norm{\cdot}_{I\ten M_\psi}=\sqrt{\Vert {\cdot}\Vert^2+\Vert (I\ten M_\psi)\cdot\Vert^2}$ to $f$.
In particular there is a subsequence $(\phi_{n_l})_l$ of $(\phi_n)_n$ such that $\phi_{n_l}(\omega)\to f(\omega)$ almost everywhere, hence
$(I\ten M_\psi\phi_{n_{l}})(\omega)=\psi(\omega)\phi_{n_{l}}(\omega)\to(I\ten M_\psi f)(\omega)$ almost everywhere.

It remains to prove our claim about the domain.

Since $I\ten M_\psi$ acts by $I\ten M_\psi f =(\omega\mapsto \psi(\omega)f(\omega))$, the inclusion ``$\subset$'' holds. 

Now, let $f\in\{g\in L^2(\Omega;\cH)\colon \omega\mapsto\norm{g(\omega)}\in\cD(M_\psi)\}$. 
Let $(\xi_n)$ be an orthonormal basis of $\cH$, then there are $\phi_n\in L^2(\Omega)$, such that
\begin{equation*}
f=\sum_n \xi_n\otimes \phi_n=\sum_n \phi_n(\cdot)\xi_n.
\end{equation*}
Since 
\begin{equation*}
\abs{\phi_{n_0}(\omega)}^2\le \sum_n \abs{\phi_n(\omega)}^2=\norm{f(\omega)}^2
\end{equation*}
for all $n_0\in \IN$, the fact that $\norm{f(\cdot)\in\cD(M_\psi)}$ implies $\phi_n\in L^2(\Omega)$ for all
$n\in\IN$.
Now, let $f_N$ be given by $f_N=\sum_{n=1}^N\phi_n(\cdot)\xi_n\in\cH\otimes\cD(M_\psi)$. Obviously, we have
\begin{equation*}
\norm{f-f_N}\xrightarrow{N\to\infty}0.
\end{equation*}
In particular, there is a subsequence $(f_{N_l})$ of $(f_N)$ such that $f_{N_l}(\omega)\xrightarrow{l\to\infty}f(\omega)$ almost everywhere.

Let $g=\phi f\in L^2(\Omega;\cH)$. Since 
\begin{equation*}
\norm{\psi(\omega)(f_{N}(\omega)-f(\omega))}=\abs{\psi(\omega)}\norm{\sum_{n=N+1}^\infty\phi_n(\omega)\xi_n}\le \abs{\psi(\omega)}\norm{f(\omega)},
\end{equation*}
Lebesgue's dominated convergence theorem yields
\begin{equation*}
\norm{I\ten M_\psi f_{N_l}-g}^2=\int\norm{\psi(\omega)(f_{N_l}(\omega)-f(\omega))}^2d\mu(\omega)\xrightarrow{l\to\infty}0.
\end{equation*}
In summary $\norm{f_{N_l}-f}\to 0$ and $\norm{I\ten M_\psi f_{N_l}-g}\to 0$. Since $I\ten M_\psi$ is closed, $f\in\cD(I\ten M_\psi)$, which proves the inclusion ``$\supset$'' and hence concludes the proof.
\end{proof}

In summary, we now know that $H^*\ten I$ and $I\ten M_\varphi$ act in the following way
\begin{align*}
((H^*\ten I) f)(\omega)&=H^*(f(\omega))\\
((I\ten M_\varphi) g)(\omega)&=\varphi(\omega)g(\omega)
\end{align*}
almost everywhere, for all $f\in \cD(H^*\ten I)$ and $g\in\cD(I\ten M_\varphi)$.
Therefore the operator $H^*\ten I +I\ten M_\varphi$ acts on $f\in\cD(H^*\ten I)\cap \cD(I\ten M_\varphi)$ by
\begin{equation*}
((H^*\ten I +I\ten M_\varphi) f)(\omega)=(\varphi(\omega)I+H^*)(f(\omega)).
\end{equation*}
almost everywhere. This extends to $\overline{H^*\ten I +I\ten M_\varphi}$, as the next lemma shows.

\begin{lm}
For every $f\in \cD(\overline{H^*\ten I +I\ten M_\varphi})$ we have
\begin{equation*}
(\overline{H^*\ten I +I\ten M_\varphi}f)(\omega)=(\varphi(\omega)I+H^*)(f(\omega))
\end{equation*}
almost everywhere.
\end{lm}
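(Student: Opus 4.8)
The plan is to bootstrap the formula from $\cD(H^*\ten I)\cap\cD(I\ten M_\varphi)$, where it was just verified, up to the closure, using only the definition of the closure together with the closedness of the adjoint $H^*$. Write $T=H^*\ten I+I\ten M_\varphi$; this is densely defined with $\cD(T)=\cD(H^*\ten I)\cap\cD(I\ten M_\varphi)$, and by the preceding discussion $(Tg)(\omega)=H^*(g(\omega))+\varphi(\omega)g(\omega)$ for a.e.\ $\omega$ whenever $g\in\cD(T)$. Recall that $f\in\cD(\overline T)$ if and only if there is a sequence $(f_n)$ in $\cD(T)$ with $f_n\to f$ and $Tf_n\to\overline T f$ in $L^2(\Omega;\cH)$, and then $\overline Tf=\lim_n Tf_n$.

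First I would fix $f\in\cD(\overline T)$ together with such a sequence $(f_n)$. Since $f_n\to f$ in $L^2(\Omega;\cH)$, the scalar functions $\omega\mapsto\norm{f_n(\omega)-f(\omega)}$ converge to $0$ in $L^2(\Omega)$, so a subsequence converges to $0$ a.e.; passing to it, we may assume $f_n(\omega)\to f(\omega)$ in $\cH$ for a.e.\ $\omega$. Applying the same argument to $Tf_n\to\overline Tf$ and passing to a further subsequence, we may also assume $(Tf_n)(\omega)\to(\overline Tf)(\omega)$ for a.e.\ $\omega$. Since $\varphi$ is $\IR$-valued, multiplying the first convergence by the scalar $\varphi(\omega)$ gives $\varphi(\omega)f_n(\omega)\to\varphi(\omega)f(\omega)$ for a.e.\ $\omega$. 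Finally, for each $n$ the identity $(Tf_n)(\omega)=H^*(f_n(\omega))+\varphi(\omega)f_n(\omega)$ holds off a $\mu$-null set, hence it holds for all $n$ and a.e.\ $\omega$, the exceptional set being a countable union of null sets.

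Next I would combine these statements. For a.e.\ $\omega$,
\[
H^*(f_n(\omega))=(Tf_n)(\omega)-\varphi(\omega)f_n(\omega)\xrightarrow{n\to\infty}(\overline Tf)(\omega)-\varphi(\omega)f(\omega),
\]
so the sequence $\bigl(f_n(\omega)\bigr)_n\subset\cD(H^*)$ converges to $f(\omega)$ while its image under $H^*$ converges to $(\overline Tf)(\omega)-\varphi(\omega)f(\omega)$. Since $H^*$ is closed, being the adjoint of a densely defined operator, we conclude that $f(\omega)\in\cD(H^*)$ and $H^*(f(\omega))=(\overline Tf)(\omega)-\varphi(\omega)f(\omega)$ for a.e.\ $\omega$, i.e.
\[
(\overline Tf)(\omega)=(\varphi(\omega)I+H^*)(f(\omega))\qquad\text{for a.e.\ }\omega,
\]
which is the claim.

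I do not anticipate a real obstacle. The only things requiring attention are the two successive passages to subsequences (first for $(f_n)$, then for $(Tf_n)$ along the surviving indices) needed to convert $L^2$-convergence into a.e.\ pointwise convergence, and the bookkeeping that the relevant a.e.\ identities hold simultaneously for all $n$ outside one null set; the substantive ingredient is nothing more than the closedness of $H^*$.
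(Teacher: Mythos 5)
Your argument is correct and matches the paper's own proof essentially step for step: both pass to a graph-norm approximating sequence from $\cD(H^*\ten I)\cap\cD(I\ten M_\varphi)$, extract subsequences to upgrade $L^2$-convergence of $f_n$ and $Tf_n$ to a.e.\ pointwise convergence, isolate $H^*(f_n(\omega))=(Tf_n)(\omega)-\varphi(\omega)f_n(\omega)$, and invoke the closedness of $H^*$ to conclude $f(\omega)\in\cD(H^*)$ with the asserted identity. Your version is, if anything, slightly more careful about the bookkeeping of null sets and successive subsequences.
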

\begin{proof}
Let $\tilde H=\overline{H^*\ten I +I\ten M_\varphi}$. Take $f\in \cD(\tilde{H})$. Then there is a sequence $(f_n)$ in in the space $\cD({H^*\ten I +I\ten M_\varphi})$ such that $\norm{f_n-f}_{\tilde{H}}\to 0$. In particular, there is a subsequence $(f_{n_l})$ such that
$(\tilde{H}f_{n_l})\xrightarrow{l\to\infty}(\tilde{H}f)$ and $f_{n_{l}}\xrightarrow{l\to\infty} f$ almost everywhere. This yields 
\begin{equation*}
(\overline{H^*\ten I +I\ten M_\varphi}f)(\omega)=\varphi(\omega)f(\omega)+\lim_l H^*(f_{n_{l}}(\omega))
\end{equation*}
almost everywhere. In particular, the limit $\lim_l H^*(f_{n_{l}}(\omega))$ exists almost everywhere.
Since $H^*$ is closed, $f(\omega)\in\cD(H^*)$ and $H^*(f(\omega))=\lim_l H^*(f_{n_{l}}(\omega))$ almost everywhere. This concludes the proof.
\end{proof}

\begin{prop}\label{domain}
The domain of $\overline{H^*\ten I +I\ten M_\varphi}$ is given as follows
\begin{equation*}
\cD=\{f\in L^2(\Omega;\cH)\colon f(\omega)\in\cD(H^*)\text{ a.e., }\omega\mapsto (H^*+\varphi(\omega)I)(f(\omega))\in L^2(\Omega;\cH)\}.
\end{equation*}
\end{prop}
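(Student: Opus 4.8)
The plan is to prove the two inclusions separately, the inclusion ``$\cD(\tilde H)\subseteq\cD$'' being essentially free from the preceding lemma and the reverse inclusion requiring a truncation argument. Write $\tilde H:=\overline{H^*\ten I+I\ten M_\varphi}$. For ``$\subseteq\cD$'', take $f\in\cD(\tilde H)$; the preceding lemma gives $f(\omega)\in\cD(H^*)$ a.e.\ and $\omega\mapsto(H^*+\varphi(\omega)I)(f(\omega))=(\tilde H f)(\omega)$, and since $\tilde H f\in L^2(\Omega;\cH)$ this is exactly the statement that $f\in\cD$.

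For the reverse inclusion I would argue by approximation, and the subtlety to keep in mind is that a function $f\in\cD$ need \emph{not} lie in $\cD(H^*\ten I)\cap\cD(I\ten M_\varphi)=\cD(H^*\ten I+I\ten M_\varphi)$: only the combination $H^*(f(\omega))+\varphi(\omega)f(\omega)$ is square-integrable, while $\omega\mapsto H^*(f(\omega))$ and $\omega\mapsto\varphi(\omega)f(\omega)$ may each fail to be. The remedy is to truncate in the spectral variable. Given $f\in\cD$, set $\Omega_n=\{\omega:\abs{\varphi(\omega)}\le n\}$ and $f_n=\mathbf{1}_{\Omega_n}f$. Since $\cD(H^*)$ is a linear subspace, $f_n(\omega)\in\cD(H^*)$ a.e.\ with $H^*(f_n(\omega))=\mathbf{1}_{\Omega_n}(\omega)H^*(f(\omega))$; on $\Omega_n$ the factor $\varphi$ is bounded, so $\omega\mapsto\varphi(\omega)f_n(\omega)\in L^2(\Omega;\cH)$, whence $\omega\mapsto H^*(f_n(\omega))=\mathbf{1}_{\Omega_n}(\omega)\bigl((H^*+\varphi(\omega)I)(f(\omega))-\varphi(\omega)f(\omega)\bigr)$ is a difference of two $L^2$-functions and hence lies in $L^2(\Omega;\cH)$. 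By Theorem \ref{DAI} this shows $f_n\in\cD(H^*\ten I)$, and the estimate $\int\abs{\varphi}^2\norm{f_n(\cdot)}^2\,d\mu\le n^2\norm f^2$ together with the domain description of $I\ten M_\varphi$ shows $f_n\in\cD(I\ten M_\varphi)$; so $f_n\in\cD(H^*\ten I+I\ten M_\varphi)$ with $(H^*\ten I+I\ten M_\varphi)f_n=\mathbf{1}_{\Omega_n}\cdot\bigl((H^*+\varphi(\cdot)I)f(\cdot)\bigr)$.

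Finally I would pass to the limit $n\to\infty$. Since $\varphi$ is real-valued we have $\Omega=\bigcup_n\Omega_n$, so $\mathbf{1}_{\Omega\setminus\Omega_n}\to 0$ pointwise, and dominated convergence — with dominating functions $\norm{f(\cdot)}^2\in L^1$ and $\norm{(H^*+\varphi(\cdot)I)f(\cdot)}^2\in L^1$ (the latter by definition of $\cD$) — yields $f_n\to f$ and $(H^*\ten I+I\ten M_\varphi)f_n\to(H^*+\varphi(\cdot)I)f(\cdot)$ in $L^2(\Omega;\cH)$. As $\tilde H$ is the closure of $H^*\ten I+I\ten M_\varphi$, it follows that $f\in\cD(\tilde H)$, which gives ``$\cD\subseteq\cD(\tilde H)$''. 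I expect the only genuinely delicate point to be the verification that the truncations $f_n$ really do land in the (non-closed) domain $\cD(H^*\ten I)\cap\cD(I\ten M_\varphi)$, i.e.\ combining Theorem \ref{DAI} with the domain description of $I\ten M_\varphi$ correctly; the limiting argument is then a routine application of dominated convergence together with closedness of $\tilde H$.
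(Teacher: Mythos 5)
Your proof is correct and follows essentially the same strategy as the paper's: truncate $f$, check that the truncations lie in $\cD(H^*\ten I)\cap\cD(I\ten M_\varphi)$ via the domain descriptions, and conclude by dominated convergence together with closedness of $\tilde H$. The only (harmless) difference is the choice of truncation set: the paper cuts off on $E_k=\{\omega:\norm{H^*(f(\omega))}\le k\norm{f(\omega)}\text{ and }\abs{\varphi(\omega)}\le k\}$, whereas you cut off only on $\{\abs{\varphi}\le n\}$ and recover square-integrability of $\omega\mapsto H^*(f_n(\omega))$ from the decomposition $H^*f=(H^*+\varphi I)f-\varphi f$ --- both work.
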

\begin{proof}
Let $\tilde{H}=\overline{H^*\ten I +I\ten M_\varphi}$. Again, $\cD(\tilde{H})\subset \cD$ is obvious. 

Let $f\in\cD$ and define the sequence $(E_k)$ of measurable subsets of $\Omega$ as follows

\begin{equation*}
E_k=\{\omega\in\Omega\colon \norm{H^*(f(\omega))}\le k\norm{f(\omega)}\text{ and } \abs{\varphi(\omega)}\le k\}.
\end{equation*}
Define $f_k=\chi(\cdot)_{E_k}f$. Since $\norm{H^*(f_k(\omega))}\le k\norm{f_k(\omega)}$, it is
$f_k\in\cD(H^*\ten I)$ and since $\norm{\varphi(\omega)f_k(\omega)}\le k\norm{f_k(\omega)}$ we have $f_k\in\cD(I\ten M_\varphi)$, hence $f_k\in\cD(H^*\ten I+I\ten M_\varphi)$.
Now, $\norm{f_k(\omega)-f(\omega)}=\chi(\omega)_{\Omega\setminus E_k}\norm{f(\omega)}\le \norm{f(\omega)}$
and $\norm{f_k(\omega)-f(\omega)}\xrightarrow{k\to\infty}0$ almost everywhere. Therefore, by Lebesgue's theorem
\begin{equation*}
\norm{f_k-f}^2=\int \norm{f_k(\omega)-f(\omega)}^2d\mu(\omega)\xrightarrow{k\to\infty}0.
\end{equation*}
Similarly, $\norm{(H^*+\varphi(\omega)I)(f_k(\omega)-f(\omega))}\le \norm{(H^*+\varphi(\omega)I)f(\omega)}$,
once again applying Lebesgue's theorem yields
\begin{equation*}
\norm{\tilde{H}f_k-g}^2=\int \norm{(H^*+\varphi(\omega)I)(f_k(\omega)-f(\omega))}^2d\mu(\omega)\xrightarrow{k\to\infty}0,
\end{equation*}
where $g=(\omega\mapsto (H^*+\varphi(\omega)I)f(\omega))$. Since $\tilde{H}$ is closed, we have $f\in\cD(\tilde{H})$.
\end{proof}
Before we can prove that $(H\ten I +I\ten M_\varphi)^*\subset\overline{H^*\ten I +I\ten M_\varphi}$, we need a general fact about adjoints (see \cite{Sch}[Prop. 7.26]).
\begin{lm}
Let $A$ and $B$ be densely defined, closable operators on the Hilbert spaces $\cK$ and $\cL$ respectively.
The operator $A\ten B$ on $\cK\ten \cL$ satisfies the following identity for its adjoint
\begin{equation*}
(A\ten B)^*=A^*\ten B^*.
\end{equation*}
\end{lm}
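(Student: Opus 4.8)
First I would dispense with the closures. Since an operator and its closure have the same adjoint, $(A\ten B)^\ast=(A\otimes B)^\ast$; and since $A^\ast$, $B^\ast$ are densely defined and closed, $A^\ast\ten B^\ast$ is by definition the closure of $A^\ast\otimes B^\ast$. So the assertion amounts to the two inclusions $\overline{A^\ast\otimes B^\ast}\subseteq(A\otimes B)^\ast$ and $(A\otimes B)^\ast\subseteq\overline{A^\ast\otimes B^\ast}$. The first I would handle by a one-line computation on elementary tensors: $\langle(A\otimes B)(f_1\otimes f_2),g_1\otimes g_2\rangle=\langle f_1\otimes f_2,(A^\ast\otimes B^\ast)(g_1\otimes g_2)\rangle$ for $f_1\in\cD(A)$, $f_2\in\cD(B)$, $g_1\in\cD(A^\ast)$, $g_2\in\cD(B^\ast)$; extending sesquilinearly over the algebraic tensor products yields $A^\ast\otimes B^\ast\subseteq(A\otimes B)^\ast$, and since the adjoint is closed one may pass to the closure.

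For the nontrivial inclusion $(A\otimes B)^\ast\subseteq\overline{A^\ast\otimes B^\ast}$ the plan is this. The case of one factor being the identity is already available: $(A\otimes I_\cL)^\ast=A^\ast\ten I_\cL$ (this is read off from Theorem~\ref{DAI} applied to the closed operator $A^\ast$, after checking the two inclusions on tensors $g\otimes\eta$ with $\eta$ in an orthonormal basis of $\cL$) and, symmetrically, $(I_\cK\otimes B)^\ast=I_\cK\ten B^\ast$. In general, given $u\in\cD((A\otimes B)^\ast)$ with $v:=(A\otimes B)^\ast u$, I would test the defining relation on elementary tensors $g\otimes h$, $g\in\cD(A)$, $h\in\cD(B)$. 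Writing $u^{Ag}\in\cL$ for the partial contraction of $u$ along $Ag$ in the first tensor leg (the vector characterized by $\langle Ag\otimes\zeta,u\rangle=\langle\zeta,u^{Ag}\rangle$ for all $\zeta\in\cL$), and $v^g\in\cL$ analogously, the relation becomes $\langle Bh,u^{Ag}\rangle=\langle h,v^g\rangle$ for all $h\in\cD(B)$, i.e. $u^{Ag}\in\cD(B^\ast)$ and $B^\ast u^{Ag}=v^g$; symmetrically, $u_{Bh}\in\cD(A^\ast)$ with $A^\ast u_{Bh}=v_h$ for every $h\in\cD(B)$. The last step is to manufacture from this fiberwise information a sequence $u_n\in\cD(A^\ast)\otimes\cD(B^\ast)$ with $u_n\to u$ and $(A^\ast\otimes B^\ast)u_n\to v$, which would place $u$ in $\overline{A^\ast\otimes B^\ast}$; for this I would bring in an orthonormal basis of $\cL$ lying in $\cD(B^\ast)$ (available since $B$ is closable, so $\cD(B^\ast)$ is dense) together with a truncation-and-dominated-convergence argument in the spirit of the proof of Proposition~\ref{domain}.

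I expect this last reconstruction to be the main obstacle. The information extracted is genuinely fiberwise---it records how $u$ pairs with vectors of the form $Ag$ and $Bh$---and, the ranges of $A$ and $B$ not being all of $\cK$ resp.\ $\cL$, it does not by itself tell us that the coordinates of $u$ lie in $\cD(A^\ast)$ and $\cD(B^\ast)$, so naive truncations of $u$ need not belong to $\cD(A^\ast)\otimes\cD(B^\ast)$; producing the approximating sequence anyway is the same kind of measurable-selection/limiting difficulty addressed, in another guise, in Section~\ref{sec:measurable}. It is worth noting that the asserted identity is self-dual under $A\leftrightarrow A^\ast$, $B\leftrightarrow B^\ast$, so the hard inclusion cannot be obtained formally from the easy one by passing to adjoints. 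As this step is carried out in full in \cite{Sch}[Prop.~7.26], one may also simply invoke that reference.
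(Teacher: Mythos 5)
The paper does not prove this lemma at all: it is quoted as a known fact with the citation \cite{Sch}[Prop.\ 7.26], so there is no in-paper argument to compare against, and your final recourse to the same reference puts your proposal on exactly the same footing. The parts you do carry out are sound --- the easy inclusion $\overline{A^\ast\otimes B^\ast}\subseteq (A\otimes B)^\ast$, the identity-factor case via Theorem \ref{DAI} and a basis expansion, and the observation that the identity is not self-dual in a way that would let the hard inclusion follow formally --- and you correctly diagnose that your sketched fiberwise/truncation route to $(A\otimes B)^\ast\subseteq\overline{A^\ast\otimes B^\ast}$ stalls precisely because the pairing data against $\Ran(A)\otimes\cL$ and $\cK\otimes\Ran(B)$ does not by itself produce an approximating sequence in $\cD(A^\ast)\otimes\cD(B^\ast)$; closing that gap requires a genuinely different device (the textbook proof regularizes rather than truncates), so invoking the reference, as the paper itself does, is the right call.
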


\begin{thm}
The following identity holds
\begin{equation*}
(H\ten I + I\ten M_\varphi)^*=\overline{H^*\ten I +I\ten M_\varphi}.
\end{equation*}
\end{thm}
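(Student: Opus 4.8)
The plan is to use the inclusion $(H\ten I + I\ten M_\varphi)^*\supset\overline{H^*\ten I +I\ten M_\varphi}$, which was already established above, so that only the reverse inclusion remains; this I would prove by a fiberwise argument feeding into Proposition \ref{domain}. Concretely, take $g\in\cD\big((H\ten I + I\ten M_\varphi)^*\big)$ and set $h=(H\ten I + I\ten M_\varphi)^*g\in L^2(\Omega;\cH)$. Testing the adjoint identity against the elementary tensors $e\otimes\phi$ with $e\in\cD(H)$ and $\phi\in\cD(M_\varphi)$ — which lie in $\cD(H\ten I + I\ten M_\varphi)$ and on which the operator acts by $\big((H\ten I + I\ten M_\varphi)(e\otimes\phi)\big)(\omega)=\phi(\omega)He+\varphi(\omega)\phi(\omega)e$ — and using that $\varphi$ is real-valued, one obtains
\[
\int_\Omega\phi(\omega)\big(\inner{He}{g(\omega)}+\varphi(\omega)\inner{e}{g(\omega)}-\inner{e}{h(\omega)}\big)\,d\mu(\omega)=0
\]
for every $\phi\in\cD(M_\varphi)$ (up to an immaterial complex conjugate on $\phi$, depending on the inner-product convention).

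Next I would conclude that, for each fixed $e\in\cD(H)$, the bracketed function $F_e$ vanishes a.e. Since $\Omega$ is $\sigma$-finite, writing $\Omega=\bigcup_j\Omega_j$ with $\mu(\Omega_j)<\infty$, the sets $E_{m,j}=\{\abs{\varphi}\le m\}\cap\Omega_j$ have finite measure, cover $\Omega$, and carry $\varphi$ boundedly; for such a set $E$, every bounded measurable function supported in $E$ lies in $\cD(M_\varphi)$, and $F_e$ is integrable on $E$, so plugging in $\phi=\chi_E\,\overline{F_e}/(\abs{F_e}+\varepsilon)$ and letting $\varepsilon\downarrow0$ forces $\int_E\abs{F_e}\,d\mu=0$, hence $F_e=0$ a.e. on $\Omega$. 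Now pick a countable set $\{e_n\}$ dense in $\cD(H)$ for the graph norm $\norm{\cdot}_H$ (possible since $\cH$ is separable) and discard the countable union of the corresponding null sets. Off this null set, $\inner{He_n}{g(\omega)}=\inner{e_n}{h(\omega)-\varphi(\omega)g(\omega)}$ holds for all $n$; both sides depend continuously on the first entry with respect to $\norm{\cdot}_H$ (on the left because $\abs{\inner{He}{g(\omega)}}\le\norm{e}_H\norm{g(\omega)}$, on the right trivially), so the identity extends to all $e\in\cD(H)$. By the definition of the adjoint this means $g(\omega)\in\cD(H^*)$ and $H^*(g(\omega))=h(\omega)-\varphi(\omega)g(\omega)$, i.e. $(H^*+\varphi(\omega)I)(g(\omega))=h(\omega)$ for a.e. $\omega$. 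Since $h\in L^2(\Omega;\cH)$, Proposition \ref{domain} yields $g\in\cD(\overline{H^*\ten I +I\ten M_\varphi})$ (with the two operators agreeing on $g$), which is the missing inclusion.

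I expect the only genuinely delicate point to be the passage from "$F_e=0$ a.e. for each $e$'' to "for a.e. $\omega$, $F_e(\omega)=0$ for all $e\in\cD(H)$'', i.e. obtaining the fiberwise adjoint identity simultaneously in $e$; this is exactly where separability of $(\cD(H),\inner{\cdot}{\cdot}_H)$ together with the joint continuity estimate is used. The auxiliary claim that $\cD(M_\varphi)$ contains enough test functions to kill $F_e$ a.e. is routine but genuinely relies on $\sigma$-finiteness of $\Omega$ and on $\varphi$ being real- (hence finite-)valued; everything else is bookkeeping with the identifications and Proposition \ref{domain}.
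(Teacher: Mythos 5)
Your proof is correct, but it reaches the missing inclusion $\cD\big((H\ten I+I\ten M_\varphi)^*\big)\subset\cD\big(\overline{H^*\ten I+I\ten M_\varphi}\big)$ by a genuinely different mechanism than the paper. The paper also cuts $\Omega$ into the sets $E_k=\{\abs{\varphi}\le k\}$, but its key step is operator-theoretic: on $E_k$ the multiplication part is bounded, so the adjoint of the restricted sum equals $\cD\big((H\ten I_{L^2(E_k)})^*\big)=\cD\big(H^*\ten I_{L^2(E_k)}\big)$ by the quoted lemma $(A\ten B)^*=A^*\ten B^*$ together with Theorem \ref{DAI}, and the pieces are then glued and fed into Proposition \ref{domain}. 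You instead derive the fiberwise adjoint relation directly: testing against elementary tensors $e\otimes\phi$ kills the bracketed function $F_e$ almost everywhere for each fixed $e$, and separability of $\cD(H)$ in the graph norm plus the continuity estimates upgrade this to a single null set off which $\inner{He}{g(\omega)}=\inner{e}{h(\omega)-\varphi(\omega)g(\omega)}$ holds for all $e\in\cD(H)$, whence $g(\omega)\in\cD(H^*)$ pointwise; Proposition \ref{domain} then finishes as in the paper. Your route is more self-contained (it does not invoke the unproven tensor-adjoint lemma cited from \cite{Sch}, nor Theorem \ref{DAI} for $H^*$), at the price of the "for each $e$, a.e.\ $\omega$" to "a.e.\ $\omega$, for all $e$" interchange, which you correctly identify as the delicate point and handle properly; the paper's route avoids that interchange entirely by working with the restricted operators globally rather than fiberwise. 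Both arguments use $\sigma$-finiteness and the finiteness of $\varphi$, and both terminate in Proposition \ref{domain}, so the endgame coincides.
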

\begin{proof}
Let $\tilde H=\overline{H^*\ten I +I\ten M_\varphi}$. Only the inclusion $\cD((H\ten I + I\ten M_\varphi)^*)\subset\cD(\tilde{H})$ is left to prove.

Let $f\in\cD((H\ten I + I\ten M_\varphi)^*)$, that is there is an ${f^*}\in L^2(\Omega;\cH)$ such that for all $g\in \cD(H\ten I + I\ten M_\varphi)$ the following holds
\begin{equation*}
\inner{f}{(H\ten I + I\ten M_\varphi)g}=\inner{{f^*}}{g}.
\end{equation*}
Let $E_k=\{\omega\in\Omega\colon \abs{\varphi(\omega)}\le k\}$ for $k\in \IN$.
Since
\begin{align*}
\cD(H\ten I)&=\{h\in L^2(\Omega;\cH)\colon h(\omega)\in\cD(H)\text{ a.e., }\omega\mapsto H(h(\omega))\in L^2(\Omega;\cH)\}\\
\cD(I\ten M_\varphi)&=\{h\in L^2(\Omega;\cH)\colon \omega\mapsto \varphi(\omega)h(\omega)\in L^2(\Omega;\cH)\}\\
\cD(H\ten I + I\ten M_\varphi)&=\cD(H\ten I)\cap\cD(I\ten M_\varphi)
\end{align*}

for $g\in \cD(H\ten I + I\ten M_\varphi)$ also $g_k=\chi_{E_k}g\in \cD(H\ten I + I\ten M_\varphi)$, hence
\begin{align*}
& \inner{f\upharpoonright_{E_k}}{(H\ten I_{L^2(E_k)} + I\ten M_{\varphi\upharpoonright_{E_k}})g\upharpoonright_{E_k}}_{L^2(E_k,\cH)}\\
=&\int_{E_k}\inner{f(\omega)}{((H\ten I + I\ten M_\varphi)g)(\omega)}d\mu(\omega)\\
=&\int_{\Omega}\inner{f(\omega)}{((H\ten I + I\ten M_\varphi)g_k)(\omega)}d\mu(\omega)\\
=&\inner{f}{(H\ten I + I\ten M_\varphi)g_k}_{L^2(\Omega;\cH)}\\
=&\inner{f^*}{g_k}_{L^2(\Omega;\cH)}\\
=&\int_{E_k}\inner{f^*(\omega)}{g(\omega)}d\mu(\omega)\\
=&\inner{f^*\upharpoonright_{E_k}}{g\upharpoonright_{E_k}}_{L^2(E_k,\cH)}.
\end{align*}
Obviously all functions in $\cD(H\ten I_{L^2(E_k)} + I\ten M_{\varphi\upharpoonright_{E_k}})$ can be extended by $0$ to functions in $\cD(H\ten I_{L^2(\Omega)} + I\ten M_{\varphi})$, and are therefore representable by restrictions of elements of $\cD(H\ten I_{L^2(\Omega)} + I\ten M_{\varphi})$. Consequently the above computation yields
\begin{equation*}
\inner{f\upharpoonright_{E_k}}{(H\ten I + I\ten M_{\varphi\upharpoonright_{E_k}})h}_{L^2(E_k,\cH)}=\inner{f^*\upharpoonright_{E_k}}{h}_{L^2(E_k,\cH)},
\end{equation*}
for all $h\in \cD(H\ten I_{L^2(E_k)} + I\ten M_{\varphi\upharpoonright_{E_k}})$, hence $f\upharpoonright_{E_k}\in\cD((H\ten I_{L^2(E_k)} + I\ten M_{\varphi\upharpoonright_{E_k}})^*)$.
By the very definition of $E_k$, the map $\varphi\upharpoonright_{E_k}$ is bounded, hence $M_{\varphi\upharpoonright_{E_k}}$ is bounded, hence $I\ten M_{\varphi\upharpoonright_{E_k}}$ is bounded by the closed graph theorem, since 
\begin{equation*}
\cD(I\ten M_{\varphi\upharpoonright_{E_k}})=\{h\in L^2(E_k,\cH)\colon \omega\mapsto \varphi(\omega)h(\omega)\in L^2(E_k,\cH)\}.
\end{equation*}
Therefore
\begin{align*}
&\cD((H\ten I_{L^2(E_k)} + I\ten M_{\varphi\upharpoonright_{E_k}})^*)\\
=&\cD((H\ten I_{L^2(E_k)})^*)\\
=&\cD(H^*\ten I_{L^2(E_k)})\\
=&\{h\in L^2(E_k,\cH)\colon h(\omega)\in\cD(H^*)\text{ a.e., }\omega\mapsto H^*(h(\omega))\in L^2(E_k,\cH)\}
\end{align*}
Now let $h\in\cD(H\ten I_{L^2(E_k,)} + I\ten M_{\varphi\upharpoonright_{E_k}})$, then
\begin{align*}
&\inner{f^*\upharpoonright_{E_k}}{h}_{L^2(E_k,\cH)}\\
=& \inner{f\upharpoonright_{E_k}}{(H\ten I_{L^2(E_k)} + I\ten M_{\varphi\upharpoonright_{E_k}})h}_{L^2(E_k,\cH)}\\
=&\int_{E_k}\inner{f\upharpoonright_{E_k}(\omega)}{((H\ten I_{L^2(E_k)} + I\ten M_{\varphi\upharpoonright_{E_k}})h)(\omega)}d\mu(\omega)\\
=&\int_{E_k}\left(\inner{H^*(f\upharpoonright_{E_k}(\omega))}{h(\omega)}+\inner{f\upharpoonright_{E_k}(\omega)}{\varphi(\omega)h(\omega)}\right)d\mu(\omega)\\
=&\inner{H^*\ten I_{L^2(E_k)}f\upharpoonright_{E_k}}{h}_{L^2(E_k,\cH)}+\int_{E_k}\inner{\varphi(\omega)(f\upharpoonright_{E_k}(\omega))}{h(\omega)}d\mu(\omega)\\
=&\inner{H^*\ten I_{L^2(E_k)}f\upharpoonright_{E_k}}{h}_{L^2(E_k,\cH)}+\int_{E_k}\inner{\varphi(\omega)(f\upharpoonright_{E_k}(\omega))}{h(\omega)}d\mu(\omega)\\
=&\inner{(H^*\ten I_{L^2(E_k)}+I\ten M_{\varphi\upharpoonright_{E_k}})f\upharpoonright_{E_k}}{h}_{L^2(E_k,\cH)}.
\end{align*}
Hence
\begin{equation*}
f^*\upharpoonright_{E_k}=(H^*\ten I_{L^2(E_k)}+I\ten M_{\varphi\upharpoonright_{E_k}})f\upharpoonright_{E_k}.
\end{equation*}
In summary we have $f(\omega)\in\cD(H^*)$ and $f^*(\omega)=H^*(f(\omega))+\varphi(\omega)$ for almost every $\omega\in E_k$.
Since $\Omega$ is covered by the $E_k$, $k\in\IN$, we have
\begin{align*}
f&\in\{k\in L^2(\Omega;\cH)\colon k(\omega)\in\cD(H^*)\text{ a.e., }\omega\mapsto H^*(k(\omega))+\varphi(\omega)k(\omega)\in L^2(\Omega;\cH)\}\}.
\end{align*}
However, by \Cref{domain}, this means $f\in\cD(\tilde{H})$.
\end{proof}

\section{Constructing A Measurable Family of Orthonormal Bases}\label{sec:measurable}
\begin{df}[Domain of regularity]
Let $A$ be an operator on the Hilbert space $\cH$. The set
\begin{equation*}
    \chi(A)=\{z\in\IK\colon      \exists c_z>0\, \forall f\in\cD(A)\colon       \norm{(A-z\II)f}\ge c_z \norm{f}\}
\end{equation*}
is called \emph{domain of regularity} of the operator $A$.
\end{df}
Note that for all closed operators $A$, the number
\begin{equation*}
d_z(A)=\dim \cR(A-z)^\perp
\end{equation*}
is constant on each connected component of the domain of regularity $\chi(A)$ of $A$ (see \cite{Wei}[Chapter 10]).
That is, for each pair of complex numbers $z,w\in\chi(A)$, there is an isometric isomorphism between $\cR(A-z)^\perp$ and $\cR(A-w)^\perp$.
Our goal is to construct such an isomorphism more or less explicitly. We will restrict ourselves here to 
closed symmetric operators.

Let $A$ be a closed symmetric operator on the separable Hilbert space $\cH$. Since $\cR(A+i)$ is a closed subspace of $\cH$, there exists $N\in\IN\cup\{\infty\}$ and $\xi_n\in \cD(A)$, $n<N$, such that $((A+i)\xi_n)_n$ is an orthonormal basis of $\cR(A+i)$.

By symmetry of $A$ we have $i\in\chi(A)$, hence $(A+i)^{-1}\colon \cR(A+i)\to \cD(A)$ is bounded.
Therefore the span of  the $\xi_n=(A+i)^{-1}(A+i)\xi_n$, $n<N$, is dense in $\cD(A)$. This gives rise to a total set in $\cR(A+z)$ for $z\in\IC^+$, as we will show next.
\begin{prop}
For every $f\in \cD(A)$ there are $\lambda_n\in\IC$, $n<N$, such that
\begin{equation*}
(A+z)f=\sum_{n<N} \lambda_n (A+z)\xi_n.
\end{equation*}
\end{prop}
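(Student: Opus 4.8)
The plan is to recognise the map $f\mapsto(A+z)f$ on $\cD(A)$ as the composition of $A+i$ with a single \emph{bounded} operator on $\cR(A+i)$; the asserted expansion of $(A+z)f$ is then simply the image, under that bounded operator, of the orthonormal-basis expansion of $(A+i)f$.

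Concretely, I would first introduce the operator $S:=(A+z)(A+i)^{-1}$. Since $A$ is symmetric, $i\in\chi(A)$, so $(A+i)^{-1}$ maps $\cR(A+i)$ boundedly into $\cD(A)$. Writing $A+z=(A+i)+(z-i)$ gives, for $h\in\cR(A+i)$,
\[
Sh=(A+i)(A+i)^{-1}h+(z-i)(A+i)^{-1}h=h+(z-i)(A+i)^{-1}h,
\]
which exhibits $S\colon\cR(A+i)\to\cH$ as a bounded linear operator. The same splitting shows $S\bigl((A+i)\xi_n\bigr)=(A+z)\xi_n$ for every $n<N$, and $S\bigl((A+i)f\bigr)=(A+z)f$ for every $f\in\cD(A)$.

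The proof then finishes quickly. Given $f\in\cD(A)$, set $\lambda_n:=\inner{(A+i)f}{(A+i)\xi_n}$. Because $\bigl((A+i)\xi_n\bigr)_{n<N}$ is an orthonormal basis of $\cR(A+i)$ and $(A+i)f\in\cR(A+i)$, we have $(A+i)f=\sum_{n<N}\lambda_n(A+i)\xi_n$ in $\cH$; applying the bounded (hence continuous) operator $S$ and using the two identities from the previous paragraph,
\[
(A+z)f=S\Bigl(\sum_{n<N}\lambda_n(A+i)\xi_n\Bigr)=\sum_{n<N}\lambda_n S\bigl((A+i)\xi_n\bigr)=\sum_{n<N}\lambda_n(A+z)\xi_n,
\]
which is exactly the claim. (Applying $(A+i)^{-1}$ in place of $S$ additionally recovers $f=\sum_{n<N}\lambda_n\xi_n$, in line with the density of the span of $\{\xi_n\}$ in $\cD(A)$ noted above.)

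There is no real obstacle here beyond checking that $S$ is everywhere defined and bounded on $\cR(A+i)$: this is the only point where the hypotheses are used, namely the symmetry of $A$ through the bounded resolvent $(A+i)^{-1}$, together with the algebraic splitting $A+z=(A+i)+(z-i)$. In particular $z\in\IC^+$ is irrelevant for this statement; it enters only afterwards, where it guarantees that $\cR(A+z)$ is closed, so that the totality of $\{(A+z)\xi_n\}$ in $\cR(A+z)$ is the conclusion one wants.
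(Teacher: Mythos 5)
Your proof is correct and is essentially the paper's argument in slightly cleaner packaging: the paper expands $(A+i)f$ in the orthonormal basis $((A+i)\xi_n)_n$, uses the same splitting $A+z=(A+i)+(z-i)$ on partial sums, and passes to the limit via the boundedness of $(A+i)^{-1}$, which is exactly your continuity-of-$S$ step. No substantive difference.
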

\begin{proof}
Since $((A+i)\xi_n)_{n<N}$ is an orthonormal basis of $\cR(A+i)$, there are $\lambda_n\in\IC$, $n<N$, such that $\sum_n\abs{\lambda_n}^2<\infty$ and
\begin{equation*}
(A+i)f=\sum_{n<N} \lambda_n (A+i)\xi_n.
\end{equation*}

Thus, for all $k<N$,
\begin{align*}
\sum_{n=1}^k \lambda_n(A+z)\xi_n&=\sum_{n=1}^k \lambda_n (A+i)\xi_n+(z-i)\sum_{n=1}^k \lambda_n \xi_n\\
&=\sum_{n=1}^k \lambda_n (A+i)\xi_n+(z-i)(A+i)^{-1}\sum_{n=1}^k \lambda_n(A+i)\xi_n.
\end{align*}

Taking $k=N-1$ if $N<\infty$ or passing to the limit $k\to\infty$ if $N=\infty$, we obtain
\begin{equation*}
\sum_{n<N}\lambda_n (A+z)\xi_n=(A+i)f+(z-i)(A+i)^{-1}(A+i)f=(A+z)f.\qedhere
\end{equation*}
\end{proof}

Note that since $(A+z)(A+i)^{-1}$ is injective, the vectors $(A+z)\xi_1,\dots,(A+z)\xi_m$ are linearly independent for every $m<N$. We can therefore apply Gram-Schmidt orthonormalization to $((A+z)\xi_n)_{n<N}$ to obtain an orthonormal basis $(\eta_m(z))_{m<N}$ of $\cR(A+z)$. Note that all the operations in the Gram-Schmidt algorithm are continuous, in particular measurable in $z$. Thus the map $z\mapsto \eta_m(z)$ is measurable.

Denote the projection onto $\cR(A+z)$ by $P_z$, that is,
\begin{equation*}
P_z=\sum_m \inner{\eta_m(z)}{\cdot}\eta_m(z).
\end{equation*}
Now $I-P_z$ is the projection onto $\cR(A+z)^\perp=\cN(A^*+\overline{z})$ and $((I-P_z)\zeta_n)$ is total in $\cN(A^*+\overline{z})$ for every orthonormal basis of $(\zeta_n)$ of $\cH$.
Fix an orthonormal basis $(\zeta_n)$ of $\cH$ and set $\rho_n(z)=(I-P_z)\zeta_n$ for $n\in\IN$.
We now introduce a modified Gram-Schmidt algorithm which does not require the input vectors to be linearly independent.

Consider the map $\kappa\colon \IR\to \IR$ given by
\begin{equation*}
\kappa(x)=\begin{cases}
1,\text{ if }x=0\\
{x},\text{ else}
\end{cases}
\end{equation*}
and note that it is obviously measurable.
Define $(\sigma_n)$ inductively by
\begin{align*}
\sigma_1(z)&=\frac{\rho_1(z)}{\kappa(\norm{\rho_1(z)})}\\
\sigma_{n+1}(z)&=\frac{\rho_{n+1}(z)-\sum_{l=1}^n\inner{\sigma_l(z)}{\rho_{n+1}(z)}\sigma_l(z)}{\kappa(\Vert\rho_{n+1}(z)-\sum_{l=1}^n\inner{\sigma_l(z)}{\rho_{n+1}(z)}\sigma_l(z)\Vert)}.
\end{align*}
By the original Gram-Schmidt algorithm it is easy to see that those of the $\sigma_n(z)$ that do not vanish form an orthonormal basis of $\cN(A^*+\overline{z})$.
Furthermore it is evident that $z\to \sigma_n(z)$ is measurable for every $n\in\IN$.
We now want to prove that we can ``extract'' those not-vanishing $\sigma_n(z)$ in a measurable manner with respect to $z$. Remember that $d_+=\dim\cN(A^*-i)=\dim\cN(A^*+\overline{z})$.
\begin{df}
Let $n_j\colon \IC^+\to\IN$ for $j<d_+$ be defined inductively by
\begin{align*}
n_1(z)&=0,\\
n_{j+1}(z)&=\inf\{n\in\IN\colon  \sigma_n(z)\neq 0, n>n_j(z)\}.
\end{align*}
\end{df}
\begin{lm}
For every $j<d_++1$, the map $z\mapsto n_j(z)$ is measurable.
\end{lm}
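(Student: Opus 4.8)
The plan is to prove measurability of each $n_j$ by induction on $j$, reducing everything to the already-established measurability of the individual maps $z\mapsto\sigma_n(z)$. The key observation is that each $n_j$ takes values in the countable set $\IN$, so it suffices to show that for every fixed $m\in\IN$ the preimage $\{z\in\IC^+\colon n_j(z)=m\}$ is measurable.

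For the base case $j=1$ there is nothing to prove, since $n_1\equiv 0$ is constant. For the inductive step, assume $z\mapsto n_j(z)$ is measurable. The defining formula
\begin{equation*}
n_{j+1}(z)=\inf\{n\in\IN\colon \sigma_n(z)\neq 0,\ n>n_j(z)\}
\end{equation*}
should be unwound as follows: for fixed $m\in\IN$ one has $n_{j+1}(z)=m$ if and only if $n_j(z)<m$, the vector $\sigma_m(z)$ is nonzero, and $\sigma_n(z)=0$ for every $n$ with $n_j(z)<n<m$. Since $n_j$ is integer-valued and measurable, for each fixed pair $(k,m)$ with $k<m$ the set $\{z\colon n_j(z)=k\}$ is measurable; intersecting it with the finitely many sets $\{z\colon \sigma_n(z)=0\}$ for $k<n<m$ (measurable, as $z\mapsto\sigma_n(z)$ is measurable and the norm is continuous) and with $\{z\colon \sigma_m(z)\neq 0\}$ gives a measurable set. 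Taking the (finite, since $k$ ranges over $0,1,\dots,m-1$) union over $k$ yields that $\{z\colon n_{j+1}(z)=m\}$ is measurable. As $m$ was arbitrary and $\IN$ is countable, $n_{j+1}$ is measurable, closing the induction.

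I do not expect any serious obstacle here: the statement is essentially bookkeeping, the only mild point being to write $n_{j+1}(z)=m$ as an explicit countable combination of the sets $\{\sigma_n=0\}$, $\{\sigma_n\neq 0\}$ and the level sets of $n_j$, so that measurability is transparent. One should perhaps note that the case $d_+=\infty$ is handled uniformly by the same induction (the range of $j$ being $j<d_++1$), and that well-definedness of $n_{j+1}$, i.e. that the infimum is over a nonempty set for $j<d_+$, is already guaranteed by the fact that the nonvanishing $\sigma_n(z)$ span the $d_+$-dimensional space $\cN(A^\ast+\overline z)$, as recorded just before the definition.
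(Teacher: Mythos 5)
Your proof is correct and follows essentially the same route as the paper: induction on $j$, with the level set $\{z\colon n_{j+1}(z)=m\}$ decomposed as a finite union over the possible values $k$ of $n_j(z)$ of finite intersections of the measurable sets $\{\sigma_n(z)=0\}$, $\{\sigma_m(z)\neq 0\}$ and $\{n_j(z)=k\}$. In fact your version is slightly cleaner, since the paper's displayed decomposition contains typographical slips (writing $\sigma_j$ for $\sigma_l$ and $\{\sigma_k(z)=0\}$ where $\{\sigma_k(z)\neq 0\}$ is meant) that you implicitly correct.
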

\begin{proof}
We proceed inductively:  $n_1$ is constant, hence measurable.
In order to prove that $n_{j+1}$ is measurable, it suffices to show that the set
$\{z\in\IC^+\colon n_{j+1}=k\}$ is measurable for every $k\in\IN$. 
Note the following
\begin{align*}
&\{z\in\IC^+\colon n_{j+1}(z)=k\}\\
=&\{z\in\IC^+\colon n_{j}(z)<k\}\cap \bigcap_{n_j(z)<l<k}\{z\in\IC^+\colon \sigma_j(z)=0\}\cap \{z\in\IC^+\colon \sigma_k(z)=0\}\\
=&\bigcup_{m=1}^{k-1}(\{z\in\IC^+\colon n_{j}(z)=m\}\cap \bigcap_{m<l<k}\{z\in\IC^+\colon \sigma_j(z)=0\})\cap \{z\in\IC^+\colon \sigma_k(z)=0\}.
\end{align*}
By induction hypothesis, $\{z\in\IC^+\colon n_{j}(z)=m\}$ is measurable 
and therefore the set
$\{z\in\IC^+\colon n_{j+1}(z)=k\}$ is measurable as well.
\end{proof}
\begin{lm}
The map $z\mapsto \sigma_{n_j(z)}(z)$ is measurable.
\end{lm}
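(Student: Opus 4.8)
The plan is to reduce the statement to the already-established measurability of the individual maps $z\mapsto\sigma_n(z)$ by slicing the parameter domain $\IC^+$ according to the value of the integer-valued index $n_j(z)$. First I would note that $n_j(z)$ is genuinely a well-defined natural number for every $z\in\IC^+$: since $j<d_++1$ and $\dim\cN(A^*+\overline z)=d_+$ for every $z\in\IC^+$ (the deficiency index being constant on the connected component $\IC^+$ of $\chi(A)$), there are at least $d_+$ indices $n$ with $\sigma_n(z)\neq 0$, so the infimum defining $n_j(z)$ is attained. Consequently $\IC^+$ is the disjoint union of the sets $A_k:=\{z\in\IC^+\colon n_j(z)=k\}$, $k\in\IN$, each of which is measurable by the preceding lemma.

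On each $A_k$ the map $z\mapsto\sigma_{n_j(z)}(z)$ coincides with $z\mapsto\sigma_k(z)$, and the latter was shown above to be measurable. Since a map whose restriction to every member of a countable measurable partition is measurable is itself measurable, the claim follows. Concretely, for any open (equivalently, Borel) subset $U\subseteq\cH$,
\begin{equation*}
\{z\in\IC^+\colon \sigma_{n_j(z)}(z)\in U\}=\bigcup_{k\in\IN}\bigl(A_k\cap\{z\in\IC^+\colon \sigma_k(z)\in U\}\bigr),
\end{equation*}
a countable union of measurable sets, hence measurable. Here I use that, $\cH$ being separable, its Borel $\sigma$-algebra is generated by the open balls, so measurability of an $\cH$-valued map is unambiguous and equivalent to weak measurability.

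There is no serious obstacle here; the proof is essentially bookkeeping. The only two points deserving a moment's care are the ones highlighted above: that $n_j$ is defined on all of $\IC^+$, which rests on the constancy of $d_z(A)$ on $\IC^+$, and the interpretation of measurability for $\cH$-valued maps, which is harmless by separability.
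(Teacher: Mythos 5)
Your proof is correct and follows essentially the same route as the paper: both decompose $\IC^+$ into the countable measurable partition $\{z\colon n_j(z)=k\}$, $k\in\IN$, and write the preimage of a measurable set under $z\mapsto\sigma_{n_j(z)}(z)$ as the countable union $\bigcup_k\bigl(\{z\colon n_j(z)=k\}\cap\{z\colon\sigma_k(z)\in\cA\}\bigr)$. Your additional remarks on the well-definedness of $n_j$ (via constancy of the deficiency index on $\IC^+$) and on the equivalence of the notions of measurability for separable $\cH$ are sound and, if anything, make the argument slightly more complete than the paper's one-line version.
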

\begin{proof}
Let $\cA\subset \cH$ be measurable.
\begin{align*}
\{z\in\IC^+\colon \rho_{n_j}(z)\in\cA\}&=\bigcup_{k=0}^\infty\left(\{z\in\IC^+\colon n_j(z)=k\}\cap\{z\in\IC^+\colon \sigma_k(z)\in\cA\}\right).
\end{align*}
By the above lemma, this set is measurable.
\end{proof}
Note that $(\sigma_{n_j(z)}(z))_{j< d_++1}$ is an orthonormal basis of $\cN(A^*+\overline{z})$.

Let us fix an orthonormal basis $(\tau_j)_{j< d_++1}$ of $\cN(A^*-i)$. Define the unitary operator
$U_z\colon \cN(A^*+\overline{z})\to \cN(A^*-i)$ by linearly and continuously extending the operator $U_z\sigma_{n_j(z)}(z)=\tau_j$. Note that
\begin{equation*}
U_z f = \sum_j \inner{\sigma_{n_j(z)}(z)}{f}\tau_j.
\end{equation*}

for all $f\in \cN(A^*+\overline{z})$. Since the inner product is continuous, for a measurable curve $\IC^+\to\cH$,  $z\mapsto f(z)\in \cN(A^*+\overline{z})$, the map
$z\mapsto U_z f(z)$ is measurable.
In summary:
\begin{thm}\label{mu}
There is a family of unitary operators $(U_z)_{z\in\IC^+}$,
\begin{equation*}
U_z\colon \cN(A^*+\overline{z})\to\cN(A^*-i),
\end{equation*}
such that for 
all measurable $f\colon \IC^+\to\cH$, 
satisfying $f(z)\in\cN(A^*+\overline{z})$, the map $z\mapsto U_z f(z)$ is measurable.
\end{thm}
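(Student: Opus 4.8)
The plan is to recognize that the theorem simply collects the constructions carried out above, so the proof reduces to verifying two assertions about the operator
\begin{equation*}
U_z f=\sum_{j<d_++1}\inner{\sigma_{n_j(z)}(z)}{f}\,\tau_j,\qquad f\in\cN(A^*+\overline z),
\end{equation*}
namely that $U_z$ is a well-defined unitary from $\cN(A^*+\overline z)$ onto $\cN(A^*-i)$ for every $z\in\IC^+$, and that $z\mapsto U_z f(z)$ is measurable for every measurable $f\colon\IC^+\to\cH$ with $f(z)\in\cN(A^*+\overline z)$.

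For the first assertion I would argue as follows. We have already observed that among the vectors $\sigma_n(z)$ those that do not vanish form an orthonormal basis of $\cN(A^*+\overline z)$ — this is the usual fact that the (modified) Gram--Schmidt procedure applied to the total sequence $(\rho_n(z))_n$ yields an orthonormal basis, the vanishing outputs being omitted. Enumerating the nonzero ones by the indices $n_j(z)$ shows that $(\sigma_{n_j(z)}(z))_{j<d_++1}$ is an orthonormal basis of $\cN(A^*+\overline z)$, where one uses that the number of nonzero $\sigma_n(z)$ equals $\dim\cN(A^*+\overline z)=\dim\cN(A^*-i)=d_+$; the latter equality is an instance of the constancy of the deficiency index on connected components of $\chi(A)$ recalled above (here $\IC^+$ is connected, and $\IC^+\subseteq\chi(A)$ because $A$ is closed and symmetric). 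Since $(\tau_j)_{j<d_++1}$ was fixed as an orthonormal basis of $\cN(A^*-i)$ indexed by the \emph{same} set, the assignment $\sigma_{n_j(z)}(z)\mapsto\tau_j$ maps one orthonormal basis onto another and hence extends uniquely to a unitary, which is precisely the displayed operator $U_z$.

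For the second assertion, fix an admissible $f$ and set $c_j(z)=\inner{\sigma_{n_j(z)}(z)}{f(z)}$. By the lemma established above, $z\mapsto\sigma_{n_j(z)}(z)$ is measurable; since $\cH$ is separable, $z\mapsto(\sigma_{n_j(z)}(z),f(z))$ is measurable into $\cH\times\cH$, and composing with the jointly continuous inner product shows that each $c_j$ is measurable. Consequently the partial sums $z\mapsto\sum_{j=1}^{m}c_j(z)\,\tau_j$ are measurable $\cH$-valued maps, being finite linear combinations of fixed vectors; since they converge in $\cH$ to $U_z f(z)$ pointwise in $z$ (with nothing to check when $d_+<\infty$), the limit $z\mapsto U_z f(z)$ is measurable.

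I do not anticipate a real obstacle here: the genuinely delicate work — arranging that $z\mapsto n_j(z)$ and $z\mapsto\sigma_{n_j(z)}(z)$ be measurable, which is what forced the modified Gram--Schmidt with the regularization $\kappa$ — has already been done. The only point that requires care is the bookkeeping of indices: one must invoke the constancy of the deficiency index on the connected set $\IC^+$ to ensure that for \emph{every} $z\in\IC^+$ exactly $d_+$ of the $\sigma_n(z)$ are nonzero, so that $(\sigma_{n_j(z)}(z))_{j<d_++1}$ really is a basis of all of $\cN(A^*+\overline z)$ and $U_z$ is a genuine unitary rather than merely an isometry onto a proper subspace.
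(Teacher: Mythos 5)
Your proposal is correct and follows essentially the same route as the paper, which in fact states Theorem \ref{mu} as a summary (``In summary:'') of the preceding construction rather than giving a separate proof; your two verification steps (that $(\sigma_{n_j(z)}(z))_{j<d_++1}$ is an orthonormal basis of $\cN(A^*+\overline z)$ so that $U_z$ is unitary, and that measurability of $z\mapsto\sigma_{n_j(z)}(z)$ plus continuity of the inner product gives measurability of $z\mapsto U_zf(z)$) are exactly the points the paper relies on. The only cosmetic remark is that the constancy of the deficiency index should be invoked for the points $-z$ (i.e.\ on the connected set $\IC^-\subset\chi(A)$, since $\cN(A^*+\overline z)=\cR(A-(-z))^\perp$), which changes nothing in substance.
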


\section{Computing The Deficiency Spaces}\label{sec:deficiency}
We finally dealt with sufficiently many technicalities so that we can prove our main result.
\begin{thm}
Let $H_A$ and $H_B$ be a symmetric and a self-adjoint operator on the Hilbert space $\cH_A$ and $\cH_B$ respectively. If
$H_{AB}=\overline{H_A\ten I+I\ten H_B}$, then
\begin{equation*}
\cN_\pm(H_{AB})\simeq\cN_\pm(H_A)\ten \cH_B.
\end{equation*}
\end{thm}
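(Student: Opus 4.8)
The plan is to use the spectral theorem to reduce to the case $\cH_B = L^2(\Omega)$, $H_B = M_\varphi$, and then combine the domain description of $H_{AB}^*$ from Section \ref{sec:adjoint} with the measurable family of unitaries from Section \ref{sec:measurable} to build the isomorphism fiberwise. Writing $H$ for $H_A$ and $\cH$ for $\cH_A$, by the theorem at the end of Section \ref{sec:adjoint} we have $H_{AB}^* = \overline{H^*\ten I + I\ten M_\varphi}$, and by Proposition \ref{domain} its domain consists of those $f\in L^2(\Omega;\cH)$ with $f(\omega)\in\cD(H^*)$ a.e.\ and $\omega\mapsto (H^*+\varphi(\omega)I)f(\omega)$ square-integrable, with $H_{AB}^*$ acting as $(H_{AB}^* f)(\omega) = H^*(f(\omega)) + \varphi(\omega)f(\omega)$. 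Therefore
\begin{equation*}
\cN_\pm(H_{AB}) = \cN(H_{AB}^*\mp i) = \{f\in L^2(\Omega;\cH)\colon f(\omega)\in\cN(H^* + (\varphi(\omega)\mp i)) \text{ a.e.}\},
\end{equation*}
since for such $f$ the pointwise equation $H^*(f(\omega)) + \varphi(\omega)f(\omega) = \pm i f(\omega)$ holds a.e., and conversely any $f$ satisfying the pointwise kernel condition automatically lies in $\cD(H_{AB}^*)$ because then $(H^*+\varphi(\omega)I)f(\omega) = \pm i f(\omega)\in L^2(\Omega;\cH)$.

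Next I would identify the fibers. Since $\varphi$ is real-valued, $\varphi(\omega)\mp i = -\overline{z}$ with $z = -\varphi(\omega)\pm i \in \IC^{\mp}$ — concentrate on the $+$ case, where $z = -\varphi(\omega) + i \in \IC^+$ — so $\cN(H^* + (\varphi(\omega) - i)) = \cN(H^* + \overline{z}) = \cR(H+z)^\perp$ with $z\in\IC^+$. Apply Theorem \ref{mu} to the closed symmetric operator $A = \overline{H_A}$ (note $A^* = H_A^*$, so the deficiency spaces are unchanged): this yields a family of unitaries $U_z\colon \cN(A^*+\overline{z})\to\cN(A^*-i) = \cN_+(H_A)$ depending measurably on $z\in\IC^+$. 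Since $\omega\mapsto z(\omega) = -\varphi(\omega)+i$ is measurable, composing gives, for each $\omega$, a unitary $V_\omega \colon \cN(H^*+(\varphi(\omega)-i)) \to \cN_+(H_A)$ such that $\omega\mapsto V_\omega g(\omega)$ is measurable whenever $\omega\mapsto g(\omega)$ is a measurable section of the fibers. The $-$ case is handled symmetrically using that $d_-(A^*) $-type families can be built the same way, or simply by noting $\cN_-(H_A) \cong \cN(A^*+i)$ and running the construction with the roles of $\pm i$ interchanged.

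Finally I would assemble the global isomorphism. Define $\Phi\colon \cN_+(H_{AB}) \to L^2(\Omega;\cN_+(H_A)) \cong \cN_+(H_A)\ten L^2(\Omega) = \cN_+(H_A)\ten\cH_B$ by $(\Phi f)(\omega) = V_\omega f(\omega)$. One checks: (i) $\Phi f$ is measurable, by the measurability clause of Theorem \ref{mu} applied to the section $\omega\mapsto f(\omega)$; (ii) $\Phi$ is isometric, since $\|(\Phi f)(\omega)\| = \|V_\omega f(\omega)\| = \|f(\omega)\|$ pointwise as each $V_\omega$ is unitary, so $\|\Phi f\|^2 = \int \|f(\omega)\|^2\,d\mu = \|f\|^2$; (iii) $\Phi$ is surjective, because given $h\in L^2(\Omega;\cN_+(H_A))$ the section $\omega\mapsto V_\omega^{-1}h(\omega)$ is measurable (the inverse family $V_\omega^{-1}$ has the same measurability property, being built from the same orthonormal data via $V_\omega^{-1}\tau_j = \sigma_{n_j(z(\omega))}(z(\omega))$, which is measurable by the lemmas of Section \ref{sec:measurable}) and square-integrable with the same norm, hence defines an element of $\cN_+(H_{AB})$ mapped to $h$. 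The main obstacle is precisely point (iii) together with the measurability bookkeeping — one must verify that the inverse of the fiberwise unitary also depends measurably on the base point, which is exactly what the explicit formula $U_z f = \sum_j \inner{\sigma_{n_j(z)}(z)}{f}\tau_j$ and the measurability of $z\mapsto\sigma_{n_j(z)}(z)$ from Section \ref{sec:measurable} were set up to guarantee; everything else is a routine translation through the Bochner-space identification $\cH\ten L^2(\Omega) \cong L^2(\Omega;\cH)$.
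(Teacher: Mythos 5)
Your proposal follows essentially the same route as the paper: reduce via the spectral theorem to $H_B=M_\varphi$ on $L^2(\Omega)$, use the adjoint computation of Section \ref{sec:adjoint} to describe $\cN_\pm(H_{AB})$ fiberwise, and apply the measurable family of unitaries from Theorem \ref{mu} to assemble the isomorphism onto $L^2(\Omega;\cN_\pm(H_A))\cong\cN_\pm(H_A)\ten\cH_B$. The only slip is the formula for the base point --- the fiber $\cN(H^*+(\varphi(\omega)-i))$ equals $\cN(H^*+\overline{z})$ for $z=\varphi(\omega)+i$, not $z=-\varphi(\omega)+i$ --- which is harmless since either point lies in $\IC^+$ and depends measurably on $\omega$.
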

\begin{proof}
Without loss of generality let $\cH_B=L^2(\Omega)$ for a $\sigma$-finite measure space $(\Omega,\cF,\mu)$ and let $H_B=M_\varphi$ be the operator of multiplication
by a real valued measurable function $\varphi\colon \Omega\to\IR$. For simplicity, denote $\cH=\cH_A$ and $H=H_A$ and $\tilde{H}=\overline{H^*\ten I +I\ten M_\varphi}$.

We will prove the existence of an isomorphism for the $\cN_+$ only; for the $\cN_-$ the proof works analogously.

If $f\in \cN_+(H_{AB})=\cN(H^*_{AB}-i)=\cN(\tilde{H}-i)$, then 
\begin{equation*}
(\tilde{H}f)(\omega)=if(\omega)\text{ a.e.,}
\end{equation*}
that is, 
\begin{align*}
H^*(f(\omega))+\varphi(\omega)f(\omega)&=if(\omega)\text{ a.e..}
\end{align*}
Hence $f(\omega)\in \cN(H^*-(i-\varphi(\omega)))$ almost everywhere. Since $\varphi$ is real-valued $i-\varphi(\omega)\in \IC^+$, thus by \Cref{mu} there is a family of unitaries 
\begin{equation*}
V_\omega\colon \cN(H^*-(i-\varphi(\omega)))\to \cN(H^*-i),
\end{equation*}

such that the map $\omega\mapsto V_\omega f(\omega)$ is measurable.
Since the $V_\omega$ are unitary, we have
\begin{equation*}
\int \norm{V_\omega f(\omega)}^2d\mu(\omega)=\int \norm{f(\omega)}^2d\mu(\omega)<\infty.
\end{equation*}
In summary, $\omega\mapsto V_\omega f(\omega)$ is in $L^2(\Omega;\cN(H^*-i))$. On the other hand, because the $V_\omega$ are onto, every $g\in L^2(\Omega;\cN(H^*-i))$ admits a representation $g(\omega)=V_\omega f(\omega)$ for some $f\in L^2(\Omega;\cH)$ satisfying $f(\omega)\in\cN(H^*-(i-\varphi(\omega)))$ almost everywhere, therefore
\begin{equation*}
\cN(\tilde{H}-i)\simeq L^2(\Omega;\cN(H^*-i)).
\end{equation*}
By closedness of $H^*$, the space $\cN(H^*-i)$, equipped with the inner product inherited from $\cH$, is a Hilbert space. In particular
\begin{equation*}
\cN(H^*-i)\ten L^2(\Omega)=L^2(\Omega;\cN(H^*-i)).\qedhere
\end{equation*}

\end{proof}

\begin{rem}
\begin{enumerate}[(i)]
\item Having constructed the deficiency spaces of $H_{AB}$, we can answer the question of existence of self-adjoint extensions of $H_{AB}$ and in case of existence, construct them by the means we examined in Section \ref{sec:prelim}.

\item Note that, because of the fact that there is a one-to-one correspondence between self-adjoint extensions of a symmetric operator and unitary extensions of its Cayley transform, there are ``more'' self-adjoint extensions of $H_{AB}$
than there are of $H_A$.

\item It still remains open to determine the deficiency spaces of $H_{AB}=H_A\otimes I+I\otimes H_B$ in the case when both $H_A$ and $H_B$ are only assumed to be symmetric. As stated in \cite{IBB}, it is quite natural to conjecture that
\begin{equation*}
\cN_{AB\pm}\simeq \cN_{A\pm}\hat\otimes \cH_B\oplus \cH_A\hat\otimes \cN_{B\pm}.
\end{equation*}
\end{enumerate}
\end{rem}

\bibliography{title}
\bibliographystyle{alpha}
\end{document}